\newtheorem{thm}{Theorem}
\newtheorem{lem}{Lemma}
\newtheorem{cor}{Corollary}
\newtheorem{prop}{Proposition}
\newcommand{\leftsub}[2]{{}_{#1}{#2}}   %\leftsub{e}{F} produces a left subscript
\newcommand{\s}{\scriptscriptstyle}
\newcommand{\IH}{\mathbb{H}}
\newcommand{\sIH}{{\scriptscriptstyle\mathbb{H}}}
\newcommand{\IC}{\mathbb{C}}
\newcommand{\sIC}{{\scriptscriptstyle\mathbb{C}}}
\newcommand{\IN}{\mathbb{N}}
\newcommand{\INo}{\mathbb{N}_{0}}
\theoremstyle{definition}
\newtheorem{Def}{Definition}
\theoremstyle{remark}
\newtheorem{rem}{Remark}
\theoremstyle{remark}
\def\cM{\mathcal M}
\def\cP{\mathcal P}
\def\cB{\mathcal B}
\def\bR{\mathbb R} %blackboard bold letters
\def\bN{\mathbb N}
\def\bC{\mathbb C}
\def\bH{\mathbb H}
\def\bB{\mathbb B}
\def\Re{\operatorname{Re}}
\def\Im{\operatorname{Im}}
\def\fai{\varphi}
\def\La{\Lambda}
\def\la{\lambda}
\def\lz{\langle}
\def\pz{\rangle}
\def\pa{\partial}
\def\nad{\overline}
\def\pod{\underline}
\def\so{\mathfrak{so}}
\def\gog{\mathfrak{g}}
\def\mand{\text{\ \ \ and\ \ \ }}
\begin{document}

\title{The Gelfand-Tsetlin bases for spherical monogenics in dimension 3}

\author{S. Bock, K. G\"urlebeck,
R. L\' avi\v cka and V. Sou\v cek}

\maketitle

\begin{abstract}
The main aim of this paper is to recall the notion of the Gelfand-Tsetlin bases (GT bases for
short) and to use it for an explicit construction of orthogonal bases for the spaces of spherical
monogenics (i.e., homogeneous solutions of the Dirac or the generalized Cauchy-Riemann equation,
respectively) in dimension 3. In the paper, using the GT construction, we obtain explicit
orthogonal bases for spherical monogenics in dimension 3 having the Appell property and we compare
them with those constructed by the first and the second author recently (by a direct analytic
approach).

\medskip\noindent{\bf Keywords:} 
Gelfand-Tsetlin basis,  orthogonal basis, Clifford analysis, spherical monogenics

\medskip\noindent{\bf AMS classification:} 30G35, 22E70
\end{abstract}

\section{Introduction}

The main aim of this paper is to discuss explicit constructions of orthogonal bases for the spaces
of spherical monogenics (i.e., homogeneous solutions of the Dirac or the generalized Cauchy-Riemann
equation, respectively) mainly in dimension 3. The theory of solutions to the Dirac or to the Cauchy-Riemann
operator can be seen at the same time as generalization of the (one-dimensional) complex function theory
 as well as refinement of harmonic analysis. Both function classes share many properties with each other
and are quite analogous to the complex case. The theory for the solutions of the Cauchy-Riemann operator
contains the concept of hypercomplex derivability whereas in the case of the Dirac equation due to the full
 rotational invariance of the solutions more tools from harmonic analysis find a direct application.

To construct orthogonal bases for spaces of
solutions of differential equations is, in general, a difficult problem. We show in the first part of the paper that the approach
formulated by Gelfand  and Tsetlin makes a construction of orthogonal bases easier in case of the Dirac equation.

The notion of a Gelfand-Tsetlin basis (GT basis) was formulated for irreducible (finite
dimensional) modules over a~general classical simple Lie algebra $\gog$ (see \cite{GT} for the
original paper and \cite{mol} for a review paper with many further citations). The main problem
solved in \cite{GT} was to write down matrices representing basis elements of $\gog$ with respect
to the GT basis. In the case when an irreducible $\gog$-module is realized explicitly (usually as a
subspace of the space of solutions of invariant differential equations), it is often possible to
construct its GT basis in quite algorithmic way. The main advantage of GT bases for practical
applications is the fact that the GT bases are automatically orthogonal with respect to any
invariant inner product on the given irreducible module.

The problem of constructing basis functions in spaces of monogenic functions has a long history. In
the very beginning it was the task to construct sufficiently many concrete monogenic functions. Already the
work of R.~Fueter contains the idea to consider a special kind of homogeneous monogenic polynomials as generalization of
the complex powers $ z^n $ and to look for an analogue of the Taylor series expansions. The result
was a series expansion in Fueter polynomials \cite{Fue1}. The important progress compared with the real
Taylor series expansion for real analytic functions was the possibility to express the increment of a
quaternion-valued functions by the hypercomplex increment of the arguments. Much later in \cite{BDS} these series were
reinvented and in \cite{Malonek1987} connected with the problem of hypercomplex derivability.
Finally, it could be shown that for Clifford algebra valued functions the existence of a local
Taylor series expansion in the symmetric powers \cite{Malonek1987}, the hypercomplex derivability
and the monogenicity are equivalent, which is a very comfortable situation and advantageous for
the solution of more complicated differential equations by means of monogenic functions. With the needs of
numerical approximations, motivated also by geometrical properties and invariance properties, a
construction of simple orthogonal systems of monogenic polynomials was needed. These problems were
connected with the idea of the Fischer decomposition (originally in  the paper \cite{fischer}) and
with the so called Almansi decompositions (see citations in \cite{Malonek-Ren}). The main
disadvantage of the Fueter polynomials for numerical purposes was that they are not orthogonal with
respect to $L_2$-inner product. That is why it was not possible to relate Taylor and Fourier
expansions so easily as in the complex case, i.e., to relate the local and the global behaviour of the functions.
First explicit constructions of complete orthonormal polynomial systems in the important case of
dimension 3 were done by I. Ca\c c\~ao \cite{cac}, the first and the second author and H.~Malonek
\cite{CacGueMal}, \cite{BockCacGue}, \cite{CacGueBock}. Main idea was the application of the
Cauchy-Riemann operator to an orthogonal system of spherical harmonics and an explicit
orthonormalization of the resulting system. These results were the basis for Fourier expansions and
related applications like the definition of a continuous operator of monogenic primitivation in the $L_2$-space of monogenic functions.

Furthermore, in \cite[pp. 254-264]{DSS} and \cite{som,van, step2}, another constructions of orthogonal bases for spherical monogenics even in all dimensions are explained.
In particular,
in \cite[Theorem 2.2.3, p. 315]{DSS}, the so-called Cauchy-Kovalevskaya (CK) method has already been developed.
But this method is not used in \cite{DSS} for a~construction of orthogonal bases although the construction is obvious
not only in dimension 3 but in an arbitrary dimension as we explain in Section \ref{sCK}. Actually, in this paper we use
the CK method for an explicit construction of the GT bases for spherical monogenics in dimension 3.
In \cite{lavSL2}, the GT bases for this case are obtained in quite a~different way and, in particular,
simple expressions of elements of these bases in terms of the Legendre polynomials are given there.
By the way, the Cauchy-Kovalevskaya method is applicable in other settings as well, see \cite{GTinH, GT2H, ckH, kerH} and \cite{DLS4}.
Similar questions were also considered by R. Delanghe for the Riesz system, see \cite{del07} and also \cite{zei}, \cite{mor09}.

Looking back at the complex case we observe that the basis functions for Taylor and Fourier
expansions are principally the same, they are real multiples of each other. An important property
of this basis is the so-called Appell property of the system $\{z^n\}_{n \in \mathbb{N}}$ with respect to the complex derivative.
Originally, P.~Appell introduced in \cite{Appell} polynomials with the property that $ \frac{d}{d_x} P_n (x) =
n \, P_{n - 1} (x)$. This property makes it possible to differentiate and integrate power series
expansions easily summand by summand and to obtain immediately a series of the same structure.
Later on Sheffer \cite{Sheffer} invented generating functions to construct Appell systems or Appell
sequences and depending on the interests of the authors nowadays one of these approaches is
preferred.\\*[2ex]
The generalization of the Appell idea to monogenic polynomials (as solutions of the Cauchy-Riemann equations) requires the
correct understanding of the hypercomplex derivative (see \cite{S}, \cite{MS} and
\cite{Gurlebeck1999}). First Appell systems of paravector-valued monogenic polynomials could be constructed by
H.~Malonek et. al. \cite{CM07}, \cite{FCM}, \cite{FM}. These systems were orthogonal but not
complete with respect to $L_2$-inner product and it was observed that the system coincides also
with a system of "special monogenic functions" as constructed in \cite{AbulConstales} without
mentioning the Appell property. In \cite{NGue2009} it was shown that the same Appell
system can be obtained by the Fueter-Sce extension of the complex Appell system $\{z^n\}_{n\in\mathbb{N}}$.
In \cite{CM08}, I. Ca\c c\~ao and H. Malonek constructed an
orthogonal Appell basis in $L_2$, equipped with the real inner product, for the solutions of the
Riesz system in dimension $3$. Later on, in a series of papers \cite{BG}, \cite{Bock2009},
\cite{Bock2010a} the first and the second author elaborate an orthogonal Appell basis of monogenic
polynomials for the space of square integrable solutions of the Cauchy-Riemann system in $\mathbb{R}^3$ (Moisil-Teodorescu system) with
respect to the quaternion-valued inner product. In \cite{Bock2009}, this system was used to
approximate solutions of the Lam\'{e} - Navier equations of linear elasticity theory.

Important for practical applications is also that this Appell system can be defined recursively
(see \cite{Bock2010a} and Theorem \ref{BG} below) and that it is not longer necessary to start with spherical harmonics.

The question arises if this system is only one that fortunately could be constructed or if it is
unique (in a certain sense). Because of the increasing amount of calculations it becomes important
to understand the underlying general principle of the constructions, to find a way to construct
bases in all dimensions. First results were obtained in \cite{Bock2010c} where a unified and
explicit construction principle of monogenic Appell bases in dimension 2, 3 and 4 was proved.

In low dimensions (3 or 4), it is quite common to consider quaternion valued functions instead of spinor valued ones,
and to replace complex vector spaces of solutions with vector spaces over the skew field of real quaternions. Analyzing
all the mentioned concrete results on Appell systems of monogenic polynomials and relating them to the case of the Dirac equation it becomes visible that there is some general
scheme in the background - the so-called Gelfand-Tsetlin bases. It is possible to relate both picture, and we shall do it below.

In the paper, we apply a general scheme of GT bases to the case of spherical monogenics in
dimension 3 and we write down explicit formulae for the corresponding orthogonal GT bases in terms
of spinor valued and quaternion valued functions. The elements of the obtained bases can be easily
renormalized to have the Appell property. Actually, it turns out that such an requirement is
characterizing the bases uniquely  (see Theorem \ref{appell} below). We compare then the formulae
obtained for quaternion valued functions with those obtained by the first and the second author in
\cite{BG} and we show that they coincide.

In Section 2, we start with a short summary of notation needed to formulate a general construction
of the GT bases. In Section 3, we show that the branching rules needed to perform the construction
of the GT bases explicitly can be realized using only classical tools of Clifford analysis, namely,
the Fischer decomposition and the Cauchy-Kovalevskaya extension. Actually, we just apply the
Cauchy-Kovalevskaya method developed already in \cite[Theorem 2.2.3, p. 315]{DSS}. In the rest of
this paper, we study properties of GT bases mainly in dimension 3. A detailed study of GT bases in
higher dimensions will be given in a~next paper. An explicit construction of the GT bases in
dimension 3 is written down in Section 4, see Theorem \ref{GT3} and Corollary \ref{corGT3}. To do
it, we use the Fischer decomposition in dimension 2 in the same way as it is done in higher
dimensions. Let us remark that the Fischer decomposition in dimension 2 (see Theorem \ref{decomp2})
is not usually considered in Clifford analysis and it has a slightly different form than in higher
dimensions. In particular, we show that the GT bases for spinor valued spherical monogenics in
dimension 3 possess a~generalization of the Appell property, that is, they possess an Appell
property not only w.r.t.\ the last real variable $x_3$ but also w.r.t.\ the remaining complex
variables $z$ and $\nad z,$ see Corollary \ref{corGT3}. Finally, in Section 5, we introduce the
quaternionic formulation and we describe its relation to the spinor case. We reformulate the GT
bases in quaternionic language (see Theorem \ref{appell} and Corollary \ref{corappell} below) and
we show that the bases having the Appell property coincide with those constructed by the first and
the second author in \cite{BG} for the Cauchy-Riemann system. This system has the Appell property
 with respect to the hypercomplex derivative on the basis polynomials orthogonal to the
 hyperholomorphic constants and then with respect to a complex derivative on the remaining basis functions.
In the end of the paper we present some applications of both approaches and construct new Taylor
 series and Fourier series expansions, respectively.

\section{Preliminaries}

First we introduce some notation. Let $(e_1,\ldots,e_m)$ be the standard basis of the Euclidean space $\bR^m$
and let $\bC_m$ be the complex Clifford algebra generated by the vectors $e_1,\ldots,e_m$ such that
$e_j^2=-1$ for $j=1,\ldots,m.$
As usual, we identify a~vector $x=(x_1,\ldots,x_m)\in\bR^m$ with the element $x_1e_1+\cdots+x_me_m$ of $\bC_m.$
Recall that the Spin group $Spin(m)$ is defined as the set of products of even number of unit vectors of $\bR^m$ endowed with the Clifford multiplication.
Now we introduce spaces of spherical monogenics. For a~vector space $V,$ we denote by $\cP_k(\bR^m,V)$ the space of $V$-valued polynomials in $\bR^m$ which are homogeneous of degree $k.$
Let $S$ be a~subspace of $\bC_m$ invariant with respect to the left multiplication by elements of $Spin(m).$ Then put
\begin{equation}
\label{monogenics}
\cM_k(\bR^m,S)=\{P\in\cP_k(\bR^m,S):\pa P=0\}
\end{equation}
where the Dirac operator $\pa$ in $\bR^m$ is defined as
$$\pa=e_1\frac{\pa\ }{\pa x_1}+\cdots+e_m\frac{\pa\ }{\pa x_m}.$$
It is well-known that if $S$ is a~basic spinor representation of the group $Spin(m)$ then the space $\cM_k(\bR^m,S)$ of spherical monogenics is an irreducible module under the so-called $L$-action,
defined by
$$
[L(s)(P)](x) = s\,P(s^{-1}xs),\ s\in Spin(m)\text{\ \ and\ \ }x=(x_1,\ldots,x_m)\in\bR^m.
$$

In this paper, we are interested in a~construction of GT bases of spherical monogenics.
Let us recall briefly the concept of  GT bases for the orthogonal case, see \cite{mol, GT}. In what follows, we
deal with complex representations of the Lie algebra $\so(m)$ of the Spin group $Spin(m).$
Let us consider a~general irreducible $\so(m)$-module $V(\mu_m)$ with the highest weight $\mu_m.$
In the even dimensional case $m=2n,$ the highest weight $\mu_m$ is a~vector $$\mu_m=(\la_{m,1},\ldots,\la_{m,n})$$ consisting entirely of integers or entirely of non-zero half-integers which satisfy the relation
\begin{equation}
\label{evenweight}
\la_{m,1}\geq\la_{m,2}\geq\cdots\geq\la_{m,n-1}\geq|\la_{m,n}|.
\end{equation}
In the odd dimensional case $m=2n+1,$ the vector $\mu_m=(\la_{m,1},\ldots,\la_{m,n})$ satisfies instead the condition
\begin{equation}
\label{oddweight}
\la_{m,1}\geq\la_{m,2}\geq\cdots\geq\la_{m,n}\geq 0.
\end{equation}
Furthermore, as is well known, the Lie algebra $\so(m)$ can be realized as the space of bivectors of Clifford algebra $\bC_m.$
In what follows, we consider a~chain of Lie algebras
\begin{equation}\label{chain}
\so(m)\supset\so(m-1)\supset\cdots\supset\so(2)
\end{equation}
where, for $k=2,\ldots,m,$ $$\so(k)=\lz \{e_{ij}: 1\leq i<j\leq
k\}\pz.$$ Here $e_{ij}=e_ie_j$ and $\lz M\pz$ stands for the span of
a~set $M.$

The key ingredient for introduction of a~GT basis is the following branching rule well-known in representation theory:
As an $\so(m-1)$-module, the given module $V(\mu_m)$
decomposes into a~multiplicity free direct sum of irreducible
$\so(m-1)$-modules
\begin{equation}\label{branch}
V(\mu_m)=\bigoplus_{\mu_{m-1}}V(\mu_m,\mu_{m-1})
\end{equation}
where the direct sum is taken over the highest weights $\mu_{m-1}$ satisfying the conditions \eqref{evenbranch} and \eqref{oddbranch} below.
Moreover, it is well-known that if the weight $\mu_m$ consists entirely of
non-zero half-integers (or integers), then so do all highest weights $\mu_{m-1}.$
In the case when $m=2n,$ the direct sum (\ref{branch}) is taken
over all highest weights
$\mu_{m-1}=(\la_{m-1,1},\ldots,\la_{m-1,n-1})$ such that
\begin{equation}
\label{evenbranch}
\la_{m,1}\geq\la_{m-1,1}\geq\la_{m,2}\geq\cdots\geq\la_{m,n-1}\geq\la_{m-1,n-1}\geq|\la_{m,n}|.
\end{equation}
In the case when $m=2n+1,$ the direct sum (\ref{branch}) is taken over all highest weights $\mu_{m-1}=(\la_{m-1,1},\ldots,\la_{m-1,n})$ such that
\begin{equation}
\label{oddbranch}
\la_{m,1}\geq\la_{m-1,1}\geq\la_{m,2}\geq\cdots\geq\la_{m,n-1}\geq\la_{m-1,n-1}\geq\la_{m,n}\geq |\la_{m-1,n}|.
\end{equation}
Moreover, with respect to any given invariant inner product on the module $V(\mu_m),$ the decomposition (\ref{branch}) is even orthogonal.

Of course, we can decompose further each module $V(\mu_m,\mu_{m-1})$ of the decomposition (\ref{branch}) into irreducible $\so(m-2)$-modules $V(\mu_m,\mu_{m-1},\mu_{m-2})$ and so on. Hence we end up with the decomposition of the given $\so(m)$-module
$V(\mu_m)$ into irreducible $\so(2)$-modules $V(\mu).$
Moreover, any such module $V(\mu)$ is uniquely determined by the so-called Gelfand-Tsetlin pattern
\begin{equation}
\label{pattern}
\mu=(\mu_m,\mu_{m-1},\ldots,\mu_2).
\end{equation}
Here $\mu$ as in (\ref{pattern}) is called the Gelfand-Tsetlin
pattern provided that each vector $\mu_j$ satisfies the conditions
(\ref{evenweight})-(\ref{oddbranch}) (with $m$ replaced by $j$) and
the numbers $\la_{j,k}$ are either all integers or all non-zero
half-integers. We denote by $P(\mu_m)$ the set of the
Gelfand-Tsetlin patterns whose first term is the highest weight
$\mu_m.$ To summarize, we decompose the given module $V(\mu_m)$ into
the direct sum of irreducible $\so(2)$-modules
\begin{equation}\label{branch+}
V(\mu_m)=\bigoplus_{\mu\in P(\mu_m)}V(\mu).
\end{equation}
Moreover, the decomposition (\ref{branch+}) is obviously orthogonal.
Let us note that the decomposition \eqref{branch+} is uniquely specified by the choice of the chain of Lie subalgebras
\eqref{chain}.

Since all submodules $V(\mu)$ are, in fact, one-dimensional we obtain easily an orthogonal basis of $V(\mu_m)$ by taking a~non-zero vector $e(\mu)$ from each module $V(\mu).$
The orthogonal basis $$E=\{e(\mu):\mu\in P(\mu_m)\}$$ is then called a~GT basis of the module $V(\mu_m).$
It is easily seen that, by the definition, the vector $e(\mu)$ is uniquely determined by $\mu\in P(\mu_m)$ up to a~scalar multiple.

\section{The Cauchy-Kovalevskaya method}\label{sCK}

To construct a~GT basis for the $\so(m)$-module $\cM_k(\bR^m, S)$ it is clear that we
need to describe quite explicitly the branching rule \eqref{branch} for this module, that is, its decomposition into irreducible
$\so(m-1)$-submodules. To this end we use only two basic
tools from Clifford analysis, namely, the Cauchy-Kovalevskaya
extension and the Fischer decomposition of spinor-valued
polynomials.
Actually, we just apply the Cauchy-Kovalevskaya method developed already in \cite[Theorem 2.2.3, p. 315]{DSS}.
We first state the Fischer decomposition, see \cite[p.
206]{DSS}.

\begin{prop}\label{fischer}
Let $m\geq 3$ and let $S$ be a~spinor space of the Clifford algebra $\bC_m,$ that is, $S$ is an
irreducible (left) module over $\bC_m.$  Then
$$\cP_k(\bR^m,S)=\bigoplus_{j=0}^kx^j\cM_{k-j}(\bR^m,S).$$
\end{prop}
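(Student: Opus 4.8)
The plan is to establish the decomposition $\cP_k(\bR^m,S)=\bigoplus_{j=0}^k x^j\cM_{k-j}(\bR^m,S)$ by an induction on $k$, using the Dirac operator $\pa$ together with its natural partner, the multiplication operator $x$ (Clifford multiplication by the vector variable). The core computation is the algebra of these two operators: since $x^2=-|x|^2$ and $\pa x+x\pa=-(2E+m)$ on scalar functions (here $E=\sum_j x_j\pa_{x_j}$ is the Euler operator), one gets that on the homogeneity-$k$ component $\pa x+x\pa$ acts as a nonzero scalar. More precisely, I would first record the identity $\pa(x^j P)=$ (scalar depending on $j$ and $\deg P$)$\,x^{j-1}P+x^j\pa P$ for homogeneous $P$, so that $\pa:\cP_k\to\cP_{k-1}$ and the restriction of $\pa$ to $x\cdot\cP_{k-1}$ can be analyzed.

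Next I would show the two pieces span everything. Given $P\in\cP_k(\bR^m,S)$, I want $P=P_0+xQ$ with $\pa P_0=0$ and $Q\in\cP_{k-1}(\bR^m,S)$; iterating on $Q$ and using the inductive hypothesis for degree $k-1$ then yields the full sum $\sum_j x^j\cM_{k-j}$. To produce the splitting, consider the map $\pa x:\cP_{k-1}(\bR^m,S)\to\cP_{k-1}(\bR^m,S)$. Using the commutation relation above one checks this map is (a nonzero scalar times the identity) plus $x\pa$; a short argument—e.g. comparing with the harmonic Fischer decomposition applied componentwise, or a direct degree/nilpotency argument on $x\pa$—shows $\pa x$ is surjective (indeed invertible) on $\cP_{k-1}(\bR^m,S)$. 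Granting that, for arbitrary $P$ pick $Q\in\cP_{k-1}$ with $\pa x Q=\pa P$; then $P_0:=P-xQ$ satisfies $\pa P_0=0$, giving the desired decomposition as a sum.

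It remains to see the sum is direct. Suppose $\sum_{j=0}^k x^j P_j=0$ with $P_j\in\cM_{k-j}(\bR^m,S)$; I would apply $\pa$ repeatedly. Applying $\pa$ once kills the $j=0$ term and, because each surviving term $x^jP_j$ maps to a nonzero scalar multiple of $x^{j-1}P_j$ plus $x^j\pa P_j=x^j\cdot 0$, we obtain $\sum_{j=1}^k c_j\,x^{j-1}P_j=0$ with all $c_j\neq 0$; iterating, one peels off the terms one at a time and concludes every $P_j=0$. A clean way to organize the nonvanishing of the scalars $c_j$ is again the relation $\pa(x^j P)=c_{j,\deg P}\,x^{j-1}P+x^j\pa P$ with $c_{j,\ell}\neq 0$ for all relevant $j,\ell$ in the ranges that occur (here the half-integer shift in $\pa x+x\pa=-(2E+m)$, with $m\geq 3$, guarantees nonvanishing—this is where the hypothesis $m\geq 3$ enters, though in fact it holds for all $m\geq 2$).

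The main obstacle is the surjectivity (equivalently invertibility) of $\pa x$ on $\cP_{k-1}(\bR^m,S)$: this is the one place where something beyond bookkeeping is needed. The cleanest route is probably to invoke the classical scalar/harmonic Fischer decomposition $\cP_k(\bR^m)=\bigoplus_j |x|^{2j}\cH_{k-2j}(\bR^m)$ and refine it through the monogenic refinement of harmonics, $\cH_\ell=\cM_\ell\oplus x\cM_{\ell-1}$, applied coordinate-wise to the $S$-valued setting; alternatively one argues directly that $x\pa$ is nilpotent-free enough that $-(2E+m)+\{\text{correction}\}$ stays invertible on each homogeneous piece. Either way, once $\pa x$ is invertible on $\cP_{k-1}(\bR^m,S)$, the existence of the splitting and the directness both follow from the operator identities above, and the induction closes.
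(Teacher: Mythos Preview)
The paper does not prove this proposition; it merely states it with a reference to \cite[p.~206]{DSS}. Your sketch is essentially the standard operator-theoretic proof found there: the identity $\pa x + x\pa = -(2E+m)$, induction on $k$, and the formula $\pa(x^jP)=c_j\,x^{j-1}P$ for monogenic $P$ (with $c_{2s}=-2s$ and $c_{2s+1}=-(m+2s+2\deg P)$, all nonzero for $j\geq 1$). Both the spanning and the directness then follow exactly as you outline.

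One small correction: the operator $x\pa$ on $\cP_{k-1}(\bR^m,S)$ is \emph{not} nilpotent---on the summand $x^j\cM_{k-1-j}$ it acts as the scalar $c_j$, and only $c_0$ vanishes---so the ``nilpotency'' route you mention does not apply. The clean fix is already implicit in your inductive setup: by the induction hypothesis $\cP_{k-1}=\bigoplus_{j=0}^{k-1}x^j\cM_{k-1-j}$, and on each summand $\pa x$ acts as the nonzero scalar $c_{j+1}$, so $\pa x$ is invertible on $\cP_{k-1}$ without any appeal to the harmonic Fischer decomposition. Your remark about the role of $m\geq 3$ is also slightly off: the constants $c_j$ are nonzero for every $m\geq 1$; the paper isolates $m=2$ (see Theorem~\ref{decomp2}) because it phrases the decomposition there over the half-spinors $S^{\pm}$ rather than over the full $\bC_2$-module $S$, not because the operator identities break down.
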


\begin{rem}
An analogous decomposition is valid also in the dimension $m=2,$ see Theorem \ref{decomp2} below for details.
\end{rem}

Now we recall the Cauchy-Kovalevskaya extension.
Let $p$ be a~$k$-homogeneous polynomial in $\bR^m$ which takes values in a~spinor space $S$ of $\bC_m.$
Such a~polynomial $p$ can be uniquely expressed as
$$p(x)=\sum_{j=0}^kp_j(\pod x)\;x_m^j$$
where $p_j$ is an $S$-valued polynomial in $\pod
x=(x_1,\ldots,x_{m-1})\in \bR^{m-1}$ which is homogeneous of degree
$k-j.$ Moreover, putting $$\pod\pa=e_1\frac{\pa\ }{\pa
x_1}+\cdots+e_{m-1}\frac{\pa\ }{\pa x_{m-1}},$$ it is easy to see
that the Dirac equation $\pa p=0$ holds if and only if, for each
$j=0,\ldots,k,$
$$p_j=\frac 1j (e_m\pod\pa)\; p_{j-1}=\cdots=\frac 1{j!}(e_m\pod\pa)^j p_0.$$
In this case, we have thus that
$$p(x)=\sum_{j=0}^k\frac 1{j!}(e_mx_m\pod\pa)^j p_0(\pod x)=(e^{e_mx_m\pod\pa}p_0)(x).$$
Now it is easy to obtain the following result, see \cite[p. 152]{DSS}.

\begin{prop}\label{ck}
Let $S$ be a~basic spinor representation of the group $Spin(m).$
Then the Cauchy-Kovalevskaya extension operator
$$CK=e^{e_mx_m\pod\pa}$$ is an $\so(m-1)$-invariant isomorphism of the module $\cP_k(\bR^{m-1},S)$ onto the module $\cM_k(\bR^m,S).$
\end{prop}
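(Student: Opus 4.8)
The plan is to verify three things about the operator $CK=e^{e_mx_m\pod\pa}$: that it is well-defined on $\cP_k(\bR^{m-1},S)$, that its image lies in $\cM_k(\bR^m,S)$ and it is a linear isomorphism onto that space, and that it intertwines the two $\so(m-1)$-actions. For well-definedness, note that for $p_0\in\cP_{k}(\bR^{m-1},S)$ the operator $\pod\pa$ lowers homogeneity by one, so $(\pod\pa)^j p_0=0$ for $j>k$ and the exponential series $\sum_{j\ge 0}\frac{1}{j!}(e_mx_m\pod\pa)^jp_0$ terminates, producing a polynomial in $\bR^m$ that is homogeneous of degree $k$ (each application of $e_mx_m\pod\pa$ preserves total degree). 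This is the routine part.

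The heart of the argument is the equivalence established in the discussion preceding the statement: writing any $k$-homogeneous $S$-valued polynomial $p$ on $\bR^m$ uniquely as $p(x)=\sum_{j=0}^kp_j(\pod x)x_m^j$, the Dirac equation $\pa p=0$ is equivalent to the recursion $p_j=\frac1j(e_m\pod\pa)p_{j-1}$ for all $j$, hence to $p=CK\,p_0$. (Here one uses $\pa=\pod\pa+e_m\partial_{x_m}$ and collects powers of $x_m$; the identity $e_m^2=-1$ is what turns the coefficient-matching into exactly that recursion.) This shows at once that $CK$ maps into $\cM_k(\bR^m,S)$, that it is injective (since $p_0$ is the restriction of $CK\,p_0$ to $x_m=0$, i.e.\ the $j=0$ coefficient), and that it is surjective onto $\cM_k(\bR^m,S)$ (any monogenic $p$ equals $CK$ of its own $j=0$ coefficient, which lies in $\cP_k(\bR^{m-1},S)$ because $p$ is $k$-homogeneous). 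Linearity is immediate from linearity of $\pod\pa$.

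It remains to check $\so(m-1)$-invariance, i.e.\ that $CK\circ L'(s)=L(s)\circ CK$ for $s\in Spin(m-1)$, where $L'$ is the $L$-action on $\cP_k(\bR^{m-1},S)$ and $L$ the $L$-action on $\cM_k(\bR^m,S)$. I would argue this at the infinitesimal level: $\so(m-1)=\lz e_{ij}:1\le i<j\le m-1\pz$ commutes with $\pod\pa$ as a differential operator under the $L$-action on $S$-valued functions on $\bR^{m-1}$ (this is the standard fact that $\pod\pa$ is $Spin(m-1)$-equivariant, which underlies the whole theory of monogenics), and the elements $e_m$ and $x_m$ are fixed by $L(s)$ for $s\in Spin(m-1)$ since $s$ is built from $e_1,\dots,e_{m-1}$ and acts on $\bR^m$ fixing the $e_m$-axis. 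Hence $e_mx_m\pod\pa$ commutes with the $Spin(m-1)$-action, and therefore so does its exponential $CK$. One has to be a little careful that the $L$-action being differentiated on the domain side and the target side match up under the decomposition $x=\pod x+x_me_m$; this bookkeeping — making sure the same group element $s$ acts compatibly on $\pod x$, on $x_m$, and on the spinor values on both sides — is the only place where something could go subtly wrong, so that is the step I would write out most carefully. Everything else is formal.
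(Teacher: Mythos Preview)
Your proposal is correct and follows essentially the same approach as the paper: the paper establishes the bijection part via exactly the recursion argument you cite (in the discussion preceding the statement) and then simply declares the result ``easy to obtain'' with a reference to \cite[p.~152]{DSS}, giving no further details. Your explicit verification of $\so(m-1)$-invariance via commutation of $e_mx_m\pod\pa$ with the $Spin(m-1)$-action is the natural way to fill in what the paper leaves implicit.
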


As we explain later on, to describe explicitly the branching rules in our situation we need to understand the CK extension of
particular terms in the Fischer decomposition, that is, the CK extension of
polynomials
of the form $\pod x^jp(\pod x)$ with $p$ being a~spherical monogenic.
But first recall that the Gegenbauer polynomial $C^{\nu}_j$ is defined as
\begin{equation}\label{gegenbauer}
C^{\nu}_j(z)=\sum_{i=0}^{[j/2]}\frac{(-1)^i(\nu)_{j-i}}{i!(j-2i)!}(2z)^{j-2i}\text{\ \ with\ \ }
(\nu)_{j}=\nu (\nu+1)\cdots (\nu+j-1),
\end{equation}
see \cite[p. 302]{AAR}.

\begin{lem}\label{lckxj}
Let $j\in\bN_0$ and $p\in\cM_k(\bR^{m-1}, S).$ Then we have that
$$CK((\pod xe_m)^j p(\pod x))=X^{(j)} p(\pod x)$$
where $X^{(0)}=1$ and, for $j\in\bN,$ the polynomial $X^{(j)}=X^{(j)}_k$ is given by
$$X^{(j)}_k(\pod x,x_m)=
\mu^j_kr^j\left (C_j^{m/2+k-1}(\frac{x_m}{r})+\frac{m+2k-2}{m+2k+j-2}C_{j-1}^{m/2+k}(\frac{x_m}{r})\frac{\pod xe_m}{r}\right )
$$
with $r=(x_1^2+x^2_2+\cdots+x_m^2)^{1/2},$ $\mu^{2l}_k=(-1)^l(C_{2l}^{m/2+k-1}(0))^{-1}$ and $$\mu^{2l+1}_k=(-1)^l\frac{m+2k+2l-1}{m+2k-2}(C_{2l}^{m/2+k}(0))^{-1}.$$
\end{lem}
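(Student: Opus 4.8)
The plan is to compute the Cauchy--Kovalevskaya extension $CK=e^{e_mx_m\pod\pa}$ applied to $(\pod xe_m)^jp(\pod x)$ directly, by understanding how powers of the operator $e_m\pod\pa$ act on polynomials of the form $\pod x^ip(\pod x)$ with $p$ monogenic. First I would record the two elementary identities that govern everything: for $p\in\cM_{k}(\bR^{m-1},S)$ one has $\pod\pa p=0$, and $\pod\pa(\pod x^ip)$ is a scalar multiple of $\pod x^{i-1}p$ — more precisely, using $\pod x^2=-|\pod x|^2$ one finds the Euler-type relation $\pod\pa(\pod x^{\,i}p)=-(i+[i\text{ odd}](2k+m-1+ \dots))\pod x^{\,i-1}p$ with the precise constant depending on the parity of $i$ and on the degrees. (This is the $\pod x$-analogue of the familiar identity $\pa(x^jP)$ for monogenic $P$.) Iterating, one sees that the subspace $\span\{\pod x^{\,i}p:i\ge 0\}$ is invariant under $e_m\pod\pa$, and that the matrix of $e_m\pod\pa$ on it, in the basis $(\pod x^{\,i}p)_i$, is a weighted shift. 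Thus $CK((\pod xe_m)^jp)$ is automatically a linear combination of $x_m^a\,\pod x^{\,b}p$ with $a+b\equiv j$, and since $\pod x^2$ is a scalar, only $b\in\{0,1\}$ survive after collecting terms: this already forces the answer to have the stated shape $f(r,x_m)p+g(r,x_m)\,\pod xe_m\,p$ with $f,g$ depending on $\pod x$ only through $|\pod x|^2=r^2-x_m^2$.

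Next I would identify $f$ and $g$ explicitly. The cleanest route is to reduce to a scalar model: fix a concrete $p$, e.g. $p(\pod x)=(\pod x_1+i\pod x_2\cdots)^{\dots}$ times a suitable constant spinor, or better, observe that it suffices to check the formula after pairing against the two summands, so one only needs the CK extension of $1$ and of $\pod xe_m$ at the level of each homogeneity. Equivalently, apply $CK$ to $(\pod xe_m)^jp$, use the series $\sum_j\frac{1}{j!}(e_mx_m\pod\pa)^j$, feed in the recursion for $\pod\pa(\pod x^{\,i}p)$, and recognize the two resulting generating series in $x_m/r$ as the Gegenbauer series. Concretely, the generating function $\sum_j C_j^{\nu}(t)\,s^j=(1-2ts+s^2)^{-\nu}$ (and its companion for $C_{j-1}^{\nu+1}$ obtained by differentiating in $t$) is exactly what appears once one sets $s=x_m/r$ after factoring $r^j$; the constants $\mu^j_k$ are then fixed by normalizing at $x_m=0$, i.e. by demanding that the degree-$0$-in-$x_m$ part of $X^{(j)}_kp$ reproduce $(\pod xe_m)^jp$ on the nose (this is the defining property of the CK extension). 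That normalization is precisely why $C_{2l}^{m/2+k-1}(0)$ and $C_{2l}^{m/2+k}(0)$ enter in the denominators, with the sign $(-1)^l$ coming from $\pod x^2=-|\pod x|^2$, and the factor $(m+2k+2l-1)/(m+2k-2)$ in $\mu^{2l+1}_k$ coming from the odd-$i$ constant in the $\pod\pa(\pod x^{\,i}p)$ recursion; the middle coefficient $\frac{m+2k-2}{m+2k+j-2}$ in front of the second Gegenbauer term likewise tracks that same recursion constant.

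The main obstacle is bookkeeping, not conceptual: matching the two coupled generating series (the even part, built from $C_j^{m/2+k-1}$, mixed with the odd part, built from $C_{j-1}^{m/2+k}$) to the recursion, and pinning down every constant's parity-dependent prefactor. One has to be careful that $e_m\pod\pa$ mixes the $\pod x$-even and $\pod x$-odd sectors with different weights, so the two generating series are not independent — they satisfy a small coupled linear ODE/recursion system in the shift parameter, whose solution is the Gegenbauer pair above. Verifying the closed form therefore amounts to checking that the claimed $X^{(j)}_k$ satisfies the same recursion $X^{(j+1)}_k p = \tfrac{1}{j+1}(e_mx_m\pod\pa)X^{(j)}_k p$ together with the initial condition $X^{(0)}=1$, which by uniqueness of the CK extension (Proposition~\ref{ck}) finishes the proof. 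I would do the induction step by applying $e_m\pod\pa$ to $r^j C_j^{m/2+k-1}(x_m/r)p$ and to $r^{j-1}C_{j-1}^{m/2+k}(x_m/r)\,\pod xe_m\,p$ separately, using the contiguous relations for Gegenbauer polynomials (derivative formula $\frac{d}{dt}C_j^\nu=2\nu C_{j-1}^{\nu+1}$ and the three-term recurrence) to collapse everything back into the two-term form; the homogeneity degree increases by one at each step, which is consistent since $\deg X^{(j)}_k=j$.
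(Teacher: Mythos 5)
Your outline — compute $CK=e^{e_m x_m\pod\pa}$ applied to $(\pod x e_m)^j p$ directly from the exponential series, observe that $e_m\pod\pa$ acts as a parity-weighted shift on the $\pod x$-powers of $p$, and then recognize the two resulting generating series in $x_m/r$ as a Gegenbauer pair normalized at $x_m=0$ — is sound as a plan and genuinely different in spirit from what the paper does. The paper does not recompute anything: it simply cites \cite[Theorem~2.2.1, p.~312]{DSS}, where the analogous polynomial $\tilde X^{(j)}_k$ is already worked out for the Cauchy--Riemann operator, and notes that $X^{(j)}_k$ is obtained from $\tilde X^{(j)}_k$ by the substitution $\pod x\mapsto \pod x e_m$, with a trailing right factor $-e_m$ when $j$ is odd. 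So you are proposing to reprove the DSS lemma from scratch for the Dirac operator, which is a legitimate alternative route.

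However, the verification step you propose at the end is broken. The recursion you want to check, $X^{(j+1)}_k p = \tfrac{1}{j+1}(e_m x_m\pod\pa)\,X^{(j)}_k p$, cannot hold: the right-hand side is homogeneous of degree $k+j$ (multiplication by $x_m$ raises degree by one, $\pod\pa$ lowers it by one), whereas $X^{(j+1)}_k p$ is homogeneous of degree $k+j+1$. Concretely, at $j=0$ the right-hand side is $e_m x_m\pod\pa p=0$ since $p$ is monogenic in $\bR^{m-1}$, while $X^{(1)}_k p = \pod x e_m\,p+(m+2k-1)x_m\,p\neq 0$. You have conflated two unrelated recursions: the CK identity $p_{i+1}=\tfrac{1}{i+1}(e_m\pod\pa)p_i$ relates successive Taylor coefficients in $x_m$ \emph{inside a single} CK extension, not the polynomials $X^{(j)}$ across different Fischer indices $j$. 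The relation that does hold between consecutive $X^{(j)}$'s goes the other way and involves $\pa/\pa x_m$: since $X^{(j+1)}_k p$ is monogenic, $\pa(X^{(j+1)}_k p)/\pa x_m = e_m\pod\pa(X^{(j+1)}_k p)$ is again monogenic and restricts at $x_m=0$ to $(e_m\pod\pa)(\pod x e_m)^{j+1}p$, a scalar multiple of $(\pod x e_m)^j p$, so $\pa(X^{(j+1)}_k p)/\pa x_m$ is a multiple of $X^{(j)}_k p$ — useful, but not a forward induction starting from $X^{(0)}=1$. To close your argument cleanly, drop the faulty recursion and instead verify directly that the claimed $X^{(j)}_k p$ is annihilated by $\pa=\pod\pa+e_m\,\pa/\pa x_m$ (the Gegenbauer derivative and three-term contiguity identities you already invoke suffice) and that $X^{(j)}_k p\big|_{x_m=0}=(\pod x e_m)^j p$ (this is where the constants $\mu^j_k$ and the values $C^\nu_{2l}(0)$ come from); uniqueness of the CK extension (Proposition~\ref{ck}) then gives the lemma.
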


\begin{proof}
In \cite[p. 312, Theorem 2.2.1]{DSS}, the corresponding polynomial we denote here by $\tilde X^{(j)}_k$ is computed for the Cauchy-Riemann operator. Fortunately,
there is an obvious relation between these two polynomials. Namely, we have that
\begin{equation*}
X^{(j)}_k(\pod x,x_m)=\left\{
\begin{array}{ll}
\tilde X^{(j)}_k(\pod xe_m,x_m),& j\text{\ even},\medskip\\{}
-\tilde X^{(j)}_k(\pod xe_m,x_m) e_m,& j\text{\ odd}.
\end{array}
\right.
\end{equation*}
To complete the proof it is sufficient to use the explicit formula for the polynomial $\tilde X^{(j)}_k.$
\end{proof}

At this moment we are ready to describe the decomposition of the $\so(m)$-module $\cM_k(\bR^m, S)$ into irreducible $\so(m-1)$-submodules.
We start with the even dimensional case.

\paragraph{The even dimensional case}

In the case when $m=2n,$
there is a~unique (up to equivalence) irreducible module $S_m$ over $\bC_m.$
As a~$Spin(m)$-module, $S_m$ is reducible and decomposes into two inequivalent irreducible submodules
$$S_m=S^+_m\oplus S^-_m.$$
Actually, $S^{\pm}_{2n}$ are unique basic spinor representations of the group $Spin(2n)$
and, putting $\theta_{2n}=(-i)^ne_1e_2\cdots e_{2n},$ we have that
\begin{equation}
\label{spm}
S^{\pm}_{2n}=\{u\in S_{2n}: \theta_{2n}u=\pm u\}.
\end{equation}
Furthermore, as $Spin(2n-1)$-modules, $S^+_{2n}$ and $S^-_{2n}$ remain still irreducible but become equivalent to each other.

Let $S$ be a~basic spinor representation for $Spin(2n),$ that is, $S\simeq S^+_{2n}$ or $S\simeq S^-_{2n}.$
In any case, it is easy to see that Proposition \ref{ck} implies that
$$\cM_k(\bR^{2n},S)=CK(\cP_k(\bR^{2n-1},S).$$
Moreover, using Proposition \ref{fischer}, we get the following decompositions of the spaces $\cP_k(\bR^{2n-1},S)$
into inequivalent irreducible $\so(2n-1)$-submodules
$$\cP_k(\bR^{2n-1},S)=
\bigoplus_{j=0}^k \;(\pod xe_{2n})^j\cM_{k-j}(\bR^{2n-1},S).$$

Finally, applying the CK extension to this decomposition and using Lemma \ref{lckxj}, we get obviously the next result, cf.\ \cite[Theorem 2.2.3, p. 315]{DSS}.

\begin{thm}\label{evendecomp}
Let $n\geq 2$ and let $S$ be a~basic spinor representation for $Spin(2n).$
Then the $\so(2n)$-module $\cM_k(\bR^{2n},S)$ decomposes into
inequivalent irreducible $\so(2n-1)$-submodules as
$$\cM_k(\bR^{2n},S)=
\bigoplus_{j=0}^k X^{(j)}\cM_{k-j}(\bR^{2n-1},S).$$
\end{thm}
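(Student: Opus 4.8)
The plan is to assemble the result directly from the three ingredients already in hand: the Fischer decomposition (Proposition \ref{fischer}), the Cauchy-Kovalevskaya isomorphism (Proposition \ref{ck}), and the explicit formula for the CK extension of the terms $(\pod xe_{2n})^j p(\pod x)$ given in Lemma \ref{lckxj}. The point is that all the analytic work has been done in Lemma \ref{lckxj}; what remains is to check that the chain of isomorphisms respects the $\so(2n-1)$-module structure and that the resulting summands are the irreducible pieces predicted by the abstract branching rule \eqref{evenbranch}.

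First I would record the starting decomposition. Since $S$ is a basic spinor representation for $Spin(2n)$, Proposition \ref{ck} gives the $\so(2n-1)$-isomorphism $CK\colon\cP_k(\bR^{2n-1},S)\to\cM_k(\bR^{2n},S)$. On the source side, Proposition \ref{fischer} applied in dimension $m-1=2n-1$ (note $2n-1\geq 3$ since $n\geq 2$, so the hypothesis of Proposition \ref{fischer} is met) yields
$$\cP_k(\bR^{2n-1},S)=\bigoplus_{j=0}^k(\pod xe_{2n})^j\cM_{k-j}(\bR^{2n-1},S),$$
where I have written the homogeneous factor as $\pod x e_{2n}$ rather than $\pod x$; since right multiplication by the fixed unit vector $e_{2n}$ is an $\so(2n-1)$-module automorphism of $\cP_k(\bR^{2n-1},S)$ commuting with $\pod\pa$, this is the same decomposition as the one stated in Proposition \ref{fischer}, just with each summand rewritten. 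Each summand $(\pod xe_{2n})^j\cM_{k-j}(\bR^{2n-1},S)$ is an irreducible $\so(2n-1)$-module isomorphic to $\cM_{k-j}(\bR^{2n-1},S)$, because multiplication by the scalar-type factor $(\pod xe_{2n})^j$ (viewed through its action on the polynomial part) intertwines the $L$-action, and these summands are pairwise inequivalent as they have distinct highest weights.

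Next I would apply $CK$ to each summand and invoke Lemma \ref{lckxj}: for $p\in\cM_{k-j}(\bR^{2n-1},S)$ we have $CK((\pod xe_{2n})^jp(\pod x))=X^{(j)}p(\pod x)$, so $CK$ carries the summand $(\pod xe_{2n})^j\cM_{k-j}(\bR^{2n-1},S)$ onto $X^{(j)}\cM_{k-j}(\bR^{2n-1},S)$. Because $CK$ is an $\so(2n-1)$-isomorphism, this image is again an irreducible $\so(2n-1)$-submodule of $\cM_k(\bR^{2n},S)$, isomorphic to $\cM_{k-j}(\bR^{2n-1},S)$, and the images remain pairwise inequivalent. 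Summing over $j$ gives
$$\cM_k(\bR^{2n},S)=\bigoplus_{j=0}^k X^{(j)}\cM_{k-j}(\bR^{2n-1},S)$$
as $\so(2n-1)$-modules, which is the claim.

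The only genuine subtlety — the step I would treat most carefully — is the compatibility of $CK$ with the $\so(2n-1)$-action on both sides, i.e.\ that the source $\cP_k(\bR^{2n-1},S)$ and target $\cM_k(\bR^{2n},S)$ carry compatible $\so(2n-1)$-structures (the source via the natural realization in $\bR^{2n-1}$, the target via restriction of the $L$-action on $\cM_k(\bR^{2n},S)$ along the chain $\so(2n)\supset\so(2n-1)$) and that $CK=e^{e_{2n}x_{2n}\pod\pa}$ intertwines them. This is exactly the content of the $\so(m-1)$-invariance asserted in Proposition \ref{ck}, so it may be quoted; one should nonetheless note that $e_{2n}$ and $x_{2n}$ commute with the generators $e_{ij}$ for $i<j\leq 2n-1$ of $\so(2n-1)$ and with $\pod\pa$, which is what makes the operator $CK$ equivariant. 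Granting Proposition \ref{ck}, the remaining verifications are purely formal, and one may additionally remark that the decomposition obtained is forced to coincide with the abstract branching \eqref{evenbranch} since both are multiplicity-free and the highest weights of the $k+1$ summands $\cM_{k-j}(\bR^{2n-1},S)$, $j=0,\dots,k$, are precisely those allowed by \eqref{evenbranch}.
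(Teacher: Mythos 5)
Your argument is correct and follows exactly the route the paper itself takes: combine the $\so(2n-1)$-equivariant isomorphism $CK\colon\cP_k(\bR^{2n-1},S)\to\cM_k(\bR^{2n},S)$ from Proposition~\ref{ck} with the Fischer decomposition of $\cP_k(\bR^{2n-1},S)$ from Proposition~\ref{fischer}, then identify the image of each summand $(\pod x e_{2n})^j\cM_{k-j}(\bR^{2n-1},S)$ via Lemma~\ref{lckxj}. You simply make explicit what the paper treats as ``easy to see'' or ``obvious'' --- the $\so(2n-1)$-equivariance of right multiplication by $e_{2n}$, the pairwise inequivalence of the summands via distinct highest weights, and the transport of irreducibility through the CK isomorphism --- so this is the same proof with the routine verifications written out.
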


Of course, using Theorem \ref{evendecomp}, it is easy to construct GT bases in dimension $2n$ when we know GT bases in dimension $2n-1.$

\begin{cor}\label{GTeven}
Let $\cB^{2n-1}_j(S)$ be GT bases of the modules $\cM_j(\bR^{2n-1},S)$ for all $j=0,\ldots,k.$
Then we have that the set
$$\cB^{2n}_k(S)=
\bigcup_{j=0}^k X^{(j)}\cB^{2n-1}_{k-j}(S)$$
is a~GT basis of the module $\cM_k(\bR^{2n},S).$
Here the polynomial $X^{(j)}$ is defined as in Lemma \ref{lckxj} and, of course, we put
$$X^{(j)}\cB^{2n-1}_{k-j}(S)=\{X^{(j)}p\ |\ p\in\cB^{2n-1}_{k-j}(S)\}.$$
\end{cor}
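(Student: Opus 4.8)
The plan is to combine Theorem \ref{evendecomp}, which realizes the top step $\so(2n)\supset\so(2n-1)$ of the defining chain \eqref{chain}, with the inductive data: the bases $\cB^{2n-1}_{k-j}(S)$ already encode the entire remaining sub-chain $\so(2n-1)\supset\cdots\supset\so(2)$. The essential point to verify is that the operator $X^{(j)}$ carries the Gelfand--Tsetlin decomposition of $\cM_{k-j}(\bR^{2n-1},S)$ over to the Gelfand--Tsetlin decomposition of its image $X^{(j)}\cM_{k-j}(\bR^{2n-1},S)$ sitting inside $\cM_k(\bR^{2n},S)$.

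First I would check that $X^{(j)}$ is an isomorphism of $\so(2n-1)$-modules from $\cM_{k-j}(\bR^{2n-1},S)$ onto $X^{(j)}\cM_{k-j}(\bR^{2n-1},S)$. By Lemma \ref{lckxj}, $X^{(j)}=CK\circ m_j$ where $m_j$ denotes left multiplication by the polynomial $(\pod xe_{2n})^j$; here $CK$ is $\so(2n-1)$-equivariant by Proposition \ref{ck}, and $m_j$ is $\so(2n-1)$-equivariant because multiplication by the vector variable $\pod x$ intertwines the $L$-action of $Spin(2n-1)$ and $e_{2n}$ commutes with $Spin(2n-1)$. Being an isomorphism of $\so(2n-1)$-modules, $X^{(j)}$ is in particular an isomorphism of $\so(\ell)$-modules for every $\ell$ occurring in the chain $\so(2n-1)\supset\cdots\supset\so(2)$, so it sends the Gelfand--Tsetlin decomposition of $\cM_{k-j}(\bR^{2n-1},S)$ relative to that sub-chain, step by step, onto the corresponding decomposition of $X^{(j)}\cM_{k-j}(\bR^{2n-1},S)$; moreover multiplicity freeness of the branching pins down each such decomposition uniquely, so there is no ambiguity.

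Then I would run the Gelfand--Tsetlin construction for $\cM_k(\bR^{2n},S)$. By Theorem \ref{evendecomp} its decomposition into irreducible $\so(2n-1)$-submodules is $\bigoplus_{j=0}^k X^{(j)}\cM_{k-j}(\bR^{2n-1},S)$; since the summands are pairwise inequivalent and the branching \eqref{branch} is multiplicity free, this is precisely the $\so(2n-1)$-step of \eqref{branch} for $\cM_k(\bR^{2n},S)$. Continuing the construction inside each summand along $\so(2n-1)\supset\cdots\supset\so(2)$ and using the previous paragraph, the resulting decomposition of $\cM_k(\bR^{2n},S)$ into one-dimensional $\so(2)$-modules is $\bigoplus_{j=0}^k\bigoplus_{V(\mu)}X^{(j)}V(\mu)$, where for each fixed $j$ the inner sum is over the $\so(2)$-submodules $V(\mu)$ that arise in the Gelfand--Tsetlin decomposition of $\cM_{k-j}(\bR^{2n-1},S)$.

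Finally I would read off the basis: a Gelfand--Tsetlin basis of $\cM_k(\bR^{2n},S)$ is obtained by choosing a nonzero vector from each one-dimensional summand, and if $e(\mu)\in V(\mu)$ is the Gelfand--Tsetlin basis vector of $\cM_{k-j}(\bR^{2n-1},S)$ then $X^{(j)}e(\mu)$ is an admissible choice of nonzero vector in $X^{(j)}V(\mu)$. Collecting these choices over all $j$ and all $\mu$ gives exactly $\bigcup_{j=0}^k X^{(j)}\cB^{2n-1}_{k-j}(S)=\cB^{2n}_k(S)$, which is therefore a Gelfand--Tsetlin basis of $\cM_k(\bR^{2n},S)$; orthogonality with respect to any invariant inner product is automatic from the general theory, since the decomposition into the final $\so(2)$-modules is orthogonal, so one does not need $X^{(j)}$ itself to be an isometry. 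I expect the only mildly delicate point to be the bookkeeping of the two-stage chain -- checking that an $\so(2n-1)$-homomorphism automatically respects all of the remaining sub-chain and that multiplicity freeness makes every intermediate decomposition canonical -- after which the statement follows formally.
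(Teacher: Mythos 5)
Your proof is correct and fills in exactly the argument the paper leaves implicit (the paper states the corollary with only the remark that it follows easily from Theorem \ref{evendecomp}): you use that $X^{(j)}=CK\circ m_j$ is an $\so(2n-1)$-equivariant isomorphism onto its image, that such a map therefore carries the whole GT decomposition along the sub-chain $\so(2n-1)\supset\cdots\supset\so(2)$ (unique by multiplicity freeness), and that Theorem \ref{evendecomp} supplies the top branching step, after which the union of pushed-forward bases is a GT basis and its orthogonality is automatic.
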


Now we are going to deal with the odd dimensional case.

\paragraph{The odd dimensional case}

In the case when $m=2n+1,$ there are just two different irreducible $\bC_m$-modules (equivalent to) $S^{\pm}_{m+1}.$
On the other hand,
there exists only a~unique  basic spinor representation $S$ of the group $Spin(m).$
In particular, as $Spin(m)$-modules, the modules $S^{\pm}_{m+1}$ are both equivalent to $S.$
Moreover, $S$ can be viewed also as an irreducible $\bC_{2n}$-module, that is, $S\simeq S_{2n}.$ As we know (see (\ref{spm})), we have therefore that
$S=S^+\oplus S^-$
where
$$
S^{\pm}=\{u\in S: \theta_{2n}u=\pm u\}
$$
are both irreducible $Spin(2n)$-modules.

Furthermore, according to Proposition \ref{ck}, we have that
$$\cM_k(\bR^m,S)=CK(\cP_k(\bR^{m-1},S)).$$
By Proposition \ref{fischer}, we can easily obtain the following decomposition of the space $\cP_k(\bR^{m-1},S)$
into inequivalent irreducible $\so(m-1)$-submodules
$$\cP_k(\bR^{m-1},S)=\bigoplus_{j=0}^k\; (\pod x e_m)^j\cM_{k-j}(\bR^{m-1},S^+)\oplus (\pod x e_m)^j\cM_{k-j}(\bR^{m-1},S^-).$$
Applying the CK extension to this decomposition together with Lemma \ref{lckxj} gives the following result, cf.\ \cite[Theorem 2.2.3, p. 315]{DSS}.

\begin{thm}\label{odddecomp}
Let $n\geq 2$ and let $S$ stand for a~basic spinor representation of $Spin(2n+1).$
Then the $\so(2n+1)$-module $\cM_k(\bR^{2n+1},S)$
decomposes into inequivalent irreducible $\so(2n)$-submodules as
follows:
$$\cM_k(\bR^{2n+1},S)=\bigoplus_{j=0}^k\; X^{(j)}\cM_{k-j}(\bR^{2n},S^+)
\oplus X^{(j)}\cM_{k-j}(\bR^{2n},S^-).$$
\end{thm}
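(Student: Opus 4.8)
The plan is to derive Theorem \ref{odddecomp} as a direct consequence of the three ingredients already assembled: the Cauchy-Kovalevskaya isomorphism (Proposition \ref{ck}), the Fischer decomposition (Proposition \ref{fischer}), and the explicit description of $CK$ on the Fischer pieces (Lemma \ref{lckxj}). The structure of the argument mirrors exactly the one used in Theorem \ref{evendecomp}; the only genuinely new point is that in odd dimension $m=2n+1$ the restriction of $CK$ is governed by $\so(2n)$-invariance, and the target spinor space $S$, when regarded as a $\bC_{2n}$-module, splits as $S=S^+\oplus S^-$ into the two inequivalent basic spinor representations of $Spin(2n)$.

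First I would record the starting point: since $S$ is a basic spinor representation of $Spin(2n+1)$, Proposition \ref{ck} (applied with $m=2n+1$) gives the $\so(2n)$-invariant isomorphism $CK\colon \cP_k(\bR^{2n},S)\xrightarrow{\ \sim\ }\cM_k(\bR^{2n+1},S)$, where here $\pod x=(x_1,\dots,x_{2n})$ and $\pod\pa=e_1\pa_{x_1}+\cdots+e_{2n}\pa_{x_{2n}}$. Next I would apply the Fischer decomposition of Proposition \ref{fischer} in dimension $2n$ — note $2n\ge 4\ge 3$, so the hypothesis $m\ge 3$ is met — to write $\cP_k(\bR^{2n},S)=\bigoplus_{j=0}^k \pod x^{\,j}\,\cM_{k-j}(\bR^{2n},S)$. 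Since multiplication by $e_{2n}$ is an invertible $\so(2n-1)$-map (and, more importantly, $\pod x^{\,j}\cM_{k-j}=(\pod xe_{2n})^j\cM_{k-j}$ because $e_{2n}$ commutes appropriately with the relevant operators, or simply because $e_{2n}$ is a unit giving a bijection of each summand), the decomposition may be rewritten with the factor $(\pod xe_{2n})^j$. Then I would invoke the splitting $S=S^+\oplus S^-$ as $Spin(2n)$-modules, noting that $\cM_{k-j}(\bR^{2n},S)=\cM_{k-j}(\bR^{2n},S^+)\oplus\cM_{k-j}(\bR^{2n},S^-)$ and that, by the remark after \eqref{branch}, each $\cM_{k-j}(\bR^{2n},S^{\pm})$ is irreducible as an $\so(2n)$-module with the two pieces inequivalent. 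This yields the displayed decomposition of $\cP_k(\bR^{2n},S)$ into inequivalent irreducible $\so(2n)$-submodules stated in the excerpt just before the theorem.

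Finally I would apply the operator $CK$ termwise. By Lemma \ref{lckxj} (with $m=2n+1$, so the "base" dimension of the inner variable is $2n$, matching $p\in\cM_{k-j}(\bR^{2n},S^{\pm})$), one has $CK\big((\pod xe_{2n})^j p(\pod x)\big)=X^{(j)}p(\pod x)$ with $X^{(j)}=X^{(j)}_{k-j}$ the explicit Gegenbauer-type polynomial. Since $CK$ is a linear isomorphism intertwining the $\so(2n)$-actions, it carries the direct sum decomposition of the domain to a direct sum decomposition of $\cM_k(\bR^{2n+1},S)$ into $\so(2n)$-submodules $X^{(j)}\cM_{k-j}(\bR^{2n},S^{\pm})$, and each such image is still irreducible (isomorphic via $CK$ to $\cM_{k-j}(\bR^{2n},S^{\pm})$), with the full list pairwise inequivalent — the $j$-grading distinguishes summands with different $j$ because $CK$ respects the degree, and for fixed $j$ the $S^+$ and $S^-$ pieces are inequivalent as $Spin(2n)$-modules. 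This gives precisely $\cM_k(\bR^{2n+1},S)=\bigoplus_{j=0}^k X^{(j)}\cM_{k-j}(\bR^{2n},S^+)\oplus X^{(j)}\cM_{k-j}(\bR^{2n},S^-)$.

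The routine points — that $CK$ is degree-preserving in the appropriate sense, that $e_{2n}$ produces the bijection between $\pod x^{\,j}\cM_{k-j}$ and $(\pod xe_{2n})^j\cM_{k-j}$, and the explicit form of $X^{(j)}$ — are all already available from Propositions \ref{fischer}, \ref{ck} and Lemma \ref{lckxj}. The only step requiring a word of care, and the place I would expect a referee to look, is the claim that the summands remain \emph{inequivalent} irreducible $\so(2n)$-modules: this rests on the cited representation-theoretic fact (multiplicity-freeness of the branching \eqref{branch} together with $2n=2\cdot n$ being even, so \eqref{evenbranch} governs the possible $\so(2n)$-types and no two of the $X^{(j)}\cM_{k-j}(\bR^{2n},S^{\pm})$ can carry the same highest weight). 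Since this is exactly the content already used in Theorem \ref{evendecomp}, the proof is genuinely short and I would present it as such, referring back to the even case for the parallel details.
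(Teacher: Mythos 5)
Your overall strategy mirrors the paper's exactly: apply the $\so(2n)$-invariant CK isomorphism from Proposition \ref{ck}, expand the source $\cP_k(\bR^{2n},S)$ via the Fischer decomposition of Proposition \ref{fischer}, split $S=S^+\oplus S^-$ into the two inequivalent $Spin(2n)$-modules, rewrite the Fischer multipliers, and push each piece through $CK$ using Lemma \ref{lckxj}. Your remark that the correct subscript is $X^{(j)}=X^{(j)}_{k-j}$ (the subscript tracking the degree of the inner monogenic) is a detail the paper leaves implicit, and it is right.

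The one point that needs correction is a persistent index slip: you write $(\pod x e_{2n})^j$ and invoke $\so(2n-1)$-invariance of left multiplication by $e_{2n}$, but with $m=2n+1$ both Lemma \ref{lckxj} and the CK setup use the \emph{last} basis vector $e_m=e_{2n+1}$, and the relevant invariance is with respect to $\so(m-1)=\so(2n)$. This is not merely cosmetic. Since $e_{2n}$ anticommutes with $\theta_{2n}=(-i)^n e_1\cdots e_{2n}$, left multiplication by $e_{2n}$ \emph{interchanges} $S^+$ and $S^-$, so for odd $j$ the set $(\pod x e_{2n})^j\cM_{k-j}(\bR^{2n},S^+)$ would coincide with $\pod x^j\cM_{k-j}(\bR^{2n},S^-)$ rather than with the intended $S^+$ piece; and $e_{2n}$ anticommutes with the elements $e_{i,2n}\in\so(2n)$, so it is not $\so(2n)$-equivariant. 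By contrast $e_{2n+1}$ commutes both with $\theta_{2n}$ (hence preserves $S^{\pm}$) and with all of $\so(2n)$, which is exactly what the rewriting and Lemma \ref{lckxj} require. With $e_{2n}$ replaced by $e_{2n+1}$ and $\so(2n-1)$ by $\so(2n)$ throughout, your argument coincides with the paper's step by step.
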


\begin{cor}\label{GTodd}
Let $\cB^{2n}_j(S^{\pm})$ be GT bases of the modules $\cM_j(\bR^{2n},S^{\pm})$ for all $j=0,\ldots,k.$
Then we have that the set
$$\cB^{2n+1}_k(S)=
\bigcup_{j=0}^k\; X^{(j)}\cB^{2n}_{k-j}(S^+)\cup X^{(j)}\cB^{2n}_{k-j}(S^-)$$
is a~GT basis of the module $\cM_k(\bR^{2n+1},S).$
Here the polynomial $X^{(j)}$ is defined as in Lemma \ref{lckxj}.
\end{cor}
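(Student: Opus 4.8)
The proof of Corollary \ref{GTodd} is a direct induction assembling the GT basis from the branching rule just proved, and the plan is as follows.

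First I would verify that the proposed set $\cB^{2n+1}_k(S)$ is indeed an orthogonal set lying inside $\cM_k(\bR^{2n+1},S)$. By Theorem \ref{odddecomp} the module $\cM_k(\bR^{2n+1},S)$ splits as the direct sum of the submodules $X^{(j)}\cM_{k-j}(\bR^{2n},S^{\pm})$, and by Proposition \ref{ck} (via Lemma \ref{lckxj}) the map $p\mapsto X^{(j)}p$ is (the restriction of) the $\so(2n)$-invariant isomorphism $CK$, hence injective; so each $\cB^{2n}_{k-j}(S^{\pm})$, being a basis of $\cM_{k-j}(\bR^{2n},S^{\pm})$, is carried to a basis of the corresponding summand. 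Taking the union over $j$ and over the two signs therefore produces a basis of the whole space by counting dimensions. Orthogonality of the union across different summands follows because the decomposition in Theorem \ref{odddecomp} is, by the general remark after \eqref{branch}, orthogonal with respect to any invariant inner product; orthogonality within each summand is inherited from the fact that $\cB^{2n}_{k-j}(S^{\pm})$ is a GT basis there and that $CK$ (suitably normalised) respects invariant inner products up to a positive scalar.

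The substantive point is to check that the resulting orthogonal basis is a \emph{GT} basis, i.e.\ that each basis vector spans a one-dimensional $\so(2)$-submodule $V(\mu)$ obtained by the iterated branching along the chain $\so(2n+1)\supset\so(2n)\supset\cdots\supset\so(2)$, and that the labels match the Gelfand--Tsetlin patterns. For this I would argue that, for fixed $j$ and fixed sign, the subspace $X^{(j)}\cM_{k-j}(\bR^{2n},S^{\pm})$ is precisely one of the isotypic $\so(2n)$-components appearing in \eqref{branch} for $V(\mu_{2n+1})=\cM_k(\bR^{2n+1},S)$: this is exactly the content of Theorem \ref{odddecomp} together with the identification of highest weights, so the first entry $\mu_{2n}$ of the pattern is determined by the pair $(j,\pm)$. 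Since $p\mapsto X^{(j)}p$ is $\so(2n)$-equivariant, it intertwines the further branching of $\cM_{k-j}(\bR^{2n},S^{\pm})$ down the chain $\so(2n)\supset\cdots\supset\so(2)$ with that of the summand inside $\cM_k(\bR^{2n+1},S)$; hence an element $p$ of the GT basis $\cB^{2n}_{k-j}(S^{\pm})$, which by hypothesis spans the one-dimensional space $V(\mu_{2n},\ldots,\mu_2)$, is sent to a vector $X^{(j)}p$ spanning $V(\mu_{2n+1},\mu_{2n},\ldots,\mu_2)$, a legitimate GT pattern for $\mu_{2n+1}$. Running this over all $j$, both signs, and all $p$ exhausts $P(\mu_{2n+1})$, so $\cB^{2n+1}_k(S)$ is a GT basis.

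The main obstacle I anticipate is the bookkeeping of highest weights: one must confirm that as $j$ ranges over $0,\ldots,k$ and the sign ranges over $\pm$, the highest weights $\mu_{2n}$ of the summands $X^{(j)}\cM_{k-j}(\bR^{2n},S^{\pm})$ are exactly those allowed by the odd branching condition \eqref{oddbranch} for the highest weight $\mu_{2n+1}$ of $\cM_k(\bR^{2n+1},S)$ (in particular that the two signs account for the two choices $\pm|\la_{m-1,n}|$ in \eqref{oddbranch} when the last entry is nonzero, and that the $j$-shift produces the correct interlacing of the remaining $\la$'s). This is a finite, explicit check once one records the highest weights of $\cM_k(\bR^m,S_{\mathrm{basic}})$ — namely $(k+\tfrac12,\tfrac12,\ldots,\pm\tfrac12)$ in the appropriate convention — but it is where the argument is least automatic; everything else is the invariance of $CK$ plus Theorem \ref{odddecomp} and the inductive hypothesis that $\cB^{2n}_{j}(S^{\pm})$ are GT bases.
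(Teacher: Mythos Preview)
Your proposal is correct and follows the same route as the paper, which states the corollary without proof as an immediate consequence of Theorem~\ref{odddecomp} and the $\so(2n)$-equivariance of $CK$ (Proposition~\ref{ck} via Lemma~\ref{lckxj}). The bookkeeping obstacle you flag is not really an issue: Theorem~\ref{odddecomp} already asserts that the summands are \emph{inequivalent} irreducible $\so(2n)$-modules, so by the multiplicity-free branching rule they are automatically exactly the pieces allowed by \eqref{oddbranch}, and no separate check of the highest weights is needed.
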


To summarize Corollaries \ref{GTeven} and \ref{GTodd} tell us that GT bases for spherical monogenics can be obtained inductively. Indeed, whenever we know GT bases in dimension $m-1$ we can easily construct GT bases in dimension $m.$

\section{The Gelfand-Tsetlin bases in dimension 3}

In this section, we construct explicitly GT bases for spinor valued spherical monogenics in dimension 3. First we recall a~realization of basic spinor representations $S^{\pm}_{2n}.$

\paragraph{Basic spinor representations $S^{\pm}_{2n}$}
For $j=1,\ldots,n,$ put
$$w_j=\frac 12(e_{2j-1}+ie_{2j}),\ \ \nad w_j=\frac 12(-e_{2j-1}+ie_{2j})\text{\ \ and\ \ }I_j=\nad w_jw_j.$$
Then $I_1,\ldots,I_n$ are mutually commuting idempotent elements in $\bC_{2n}.$
Moreover, $I=I_1I_2\cdots I_n$ is a~primitive idempotent in $\bC_{2n}$ and
$$S_{2n}=\bC_{2n}I$$
is a~minimal left ideal in $\bC_{2n}.$ Putting $W=\lz w_1,\ldots,w_n\pz,$ we have that
$$S_{2n}=\La(W)I,\ \  S^{+}_{2n}=\La^{+}(W)I\text{\ \ and\ \ } S^{-}_{2n}=\La^{-}(W)I$$
where $\La(W)$ is the exterior algebra over $W$ with the even part $\La^{+}(W)$ and the odd part $\La^{-}(W).$
See \cite[pp. 114-118]{DSS} for details.

Furthermore, it is well-known that, for each $u\in\bC_{2n},$ there is a~unigue complex number $[u]_0$ such that
$IuI=[u]_0I$ and that an inner product on $S_{2n}$ is given by
\begin{equation}\label{vacuum}
(s,t)=[\nad u v]_0\text{\ for\ }s=uI,\ t=vI\text{\ with\ }u,v\in\bC_{2n}.
\end{equation}
Here, for each Clifford number $u\in\bC_m,$ $\nad u$ stands for its Clifford conjugate.
See \cite[pp. 120-125]{DSS} for details.

In the next paragraph, we introduce invariant inner products on the spin modules of spherical monogenics.

\paragraph{Invariant inner products}

Let us remark that, on each (finite-dimensional) irreducible representation of $Spin(m)$ there exists always an invariant inner product and, in addition, that the invariant inner product is determined uniquely up to a~positive multiple. In what follows, we recall two well-known realizations of the invariant inner product on the module $\cM_k(\bR^m,S),$
namely, the $L_2$-inner product and the Fischer inner product. For $P,Q\in \cM_k(\bR^m,S),$ we define the $L_2$-inner product of $P$ and $Q$ as
\begin{equation}\label{L2product}
(P,Q)_1=\int_{\bB_m}(P,Q)\;d\la^m
\end{equation}
where $\bB_m$ is the unit ball in $\bR^m$ and $d\la^m$ is the Lebesgue measure in $\bR^m.$

Now we introduce the Fischer inner product. Each $P\in\cP_k(\bR^m,S)$ is of the form
$$P(x)=\sum_{|\alpha|=k}a_{\alpha}x^{\alpha}$$
where the sum is taken over all multi-indexes $\alpha=(\alpha_1,\ldots,\alpha_m)$ of $\bN^m_0$ with  $|\alpha|=\alpha_1+\cdots+\alpha_m=k,$ all coefficients $a_{\alpha}$ belong to $S$ and $x^{\alpha}=x_1^{\alpha_1}\cdots x_m^{\alpha_m}.$ For $P,Q\in\cP_k(\bR^m,S),$ we define the Fischer inner product of $P$ and $Q$ as
\begin{equation}\label{Fischerproduct}
(P,Q)_2=\sum_{|\alpha|=k}\alpha!\;(a_{\alpha},b_{\alpha})
\end{equation}
where $\alpha!=\alpha_1!\cdots\alpha_m!,$ $P(x)=\sum a_{\alpha}x^{\alpha}$ and $Q(x)=\sum b_{\alpha}x^{\alpha}.$
It is easily seen that
$$(P,Q)_2=[(\nad P(\frac{\pa\ }{\pa x})Q)(0)]_0
\text{\ \ \ with\ \ \ }\nad P(\frac{\pa\ }{\pa
x})=\sum_{|\alpha|=k}\nad a_{\alpha}\frac{\pa^{|\alpha|}}{\pa
x^{\alpha}}.$$ Here $\pa^{|\alpha|}/\pa
x^{\alpha}=(\pa^{\alpha_1}/\pa
x^{\alpha_1}_1)\cdots(\pa^{\alpha_m}/\pa x^{\alpha_m}_m)$ as usual.

\paragraph{Fischer decompositions in the dimension $m=2$}
As we have remarked in Introduction, the Fischer decomposition in dimension 2 is not usually considered in Clifford analysis and it has a slightly different form than in higher dimensions.
In this case, we have that $\so(2)=\lz e_{12}\pz,$ $S=S_2=\lz I_1, w_1I_1\pz,$ $S^{+}=\lz I_1\pz$ and $S^{-}=\lz w_1I_1\pz$ with
$$I_1=\frac 12(1-ie_{12})\text{\ \ \ and\ \ \ }w_1I_1=\frac 12(e_1+ie_2).$$
Each $s\in S$ is of the form $s=s^{+}I_1+s^{-}w_1I_1$ for some complex numbers $s^{\pm}.$ We write $s=(s^{+},s^{-}).$
Let us remark that each $P\in\cP_k(\bR^2,S)$ can be expressed as $P=(P^{+},P^{-})$ for some complex valued $k$-homogeneous polynomials $P^{\pm}$ in variables $z=x_1+ix_2$ and $\nad z=x_1-ix_2.$
Furthermore, the action of $\so(2)$ on the space $\cP_k(\bR^2,S)$ is given by
$$dL(e_{12}/2)=\frac{d}{dt} L(\exp(te_{12}/2))|_{t=0}=\frac{e_{12}}{2}+x_2\frac{\pa\ }{\pa x_1}-x_1\frac{\pa\ }{\pa x_2}.$$
Put $L_{12}=dL(e_{12}/2).$
Now it is easy to show the next result.

\begin{thm}\label{decomp2} Let $\cM^{2,\pm}_{j}=\cM_j(\bR^2,S^{\pm})$ for each $j=0,\ldots,k.$ Then we have that
$\cM^{2,+}_{j}=\lz (\nad z^j,0)\pz,$ $\cM^{2,-}_{j}=\lz (0,z^j)\pz,$
$$\cP_k(\bR^2,S^{+})=\bigoplus_{j=0}^k z^j\cM^{2,+}_{k-j}
\text{\ \ \ and\ \ \ }
\cP_k(\bR^2,S^{-})=\bigoplus_{j=0}^k \nad z^j\cM^{2,-}_{k-j}.$$
In addition, for each $j=0,\ldots,k,$
the $\so(2)$-modules $z^j\cM^{2,+}_{k-j}$ and $\nad z^j\cM^{2,-}_{k-j}$ are both irreducible with the highest weights $k+\frac 12 -2j$ and $-k-\frac 12 +2j,$ respectively.
\end{thm}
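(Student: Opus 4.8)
The plan is to reduce everything to two explicit computations in the Witt basis, namely the form of the Dirac operator $\pa$ and of $L_{12}$ in the complex coordinates $z,\nad z.$ From $w_1=\frac12(e_1+ie_2),$ $\nad w_1=\frac12(-e_1+ie_2)$ one gets $e_1=w_1-\nad w_1$ and $e_2=-i(w_1+\nad w_1),$ while the relations $w_1^2=\nad w_1^2=0$ and $w_1\nad w_1+\nad w_1w_1=1$ give $\nad w_1w_1=I_1,$ $I_1w_1=0,$ $w_1I_1=w_1,$ $\nad w_1I_1=0.$ A short calculation then yields the left action of the Clifford generators on the basis $I_1,w_1I_1$ of $S:$
$$e_1I_1=w_1I_1,\ \ e_2I_1=-iw_1I_1,\ \ e_1w_1I_1=-I_1,\ \ e_2w_1I_1=-iI_1,\ \ e_{12}I_1=iI_1,\ \ e_{12}w_1I_1=-iw_1I_1.$$
I would record this table once and use it throughout.

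Writing $P=(P^{+},P^{-})=P^{+}I_1+P^{-}w_1I_1$ and using $\pa_{x_1}=\pa_z+\pa_{\nad z},$ $\pa_{x_2}=i(\pa_z-\pa_{\nad z})$ together with the table, a direct substitution gives
$$\pa P=(-2\pa_{\nad z}P^{-},\ 2\pa_zP^{+}).$$
Hence $P\in\cM_j(\bR^2,S)$ if and only if $P^{+}$ depends only on $\nad z$ and $P^{-}$ only on $z;$ homogeneity of degree $j$ then forces $P^{+}\in\lz\nad z^j\pz$ and $P^{-}\in\lz z^j\pz.$ Since $\pa$ carries $S^{+}$-valued polynomials to $S^{-}$-valued ones and conversely, this yields $\cM^{2,+}_j=\lz(\nad z^j,0)\pz$ and $\cM^{2,-}_j=\lz(0,z^j)\pz.$

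For the Fischer-type decomposition, observe that $z^j\cM^{2,+}_{k-j}=\lz(z^j\nad z^{k-j},0)\pz$ and that $\{z^j\nad z^{k-j}:j=0,\dots,k\}$ is a basis of $\cP_k(\bR^2,\bC),$ the space of complex $k$-homogeneous polynomials in $z,\nad z$ (equivalently in $x_1,x_2$). As $\cP_k(\bR^2,S^{+})$ is precisely the set of $(Q,0)$ with $Q\in\cP_k(\bR^2,\bC),$ the asserted equality follows, the sum being direct by linear independence of the monomials; the $S^{-}$-case is symmetric with $\nad z^j\cM^{2,-}_{k-j}=\lz(0,\nad z^jz^{k-j})\pz.$ For the weights, I would rewrite the rotational part of $L_{12}=\frac{e_{12}}2+x_2\pa_{x_1}-x_1\pa_{x_2}$ as $-i(z\pa_z-\nad z\pa_{\nad z}),$ which acts on $z^a\nad z^b$ by the scalar $-i(a-b),$ while $\frac{e_{12}}2$ acts on $S^{+}$ by $\frac i2$ and on $S^{-}$ by $-\frac i2.$ Consequently $L_{12}$ multiplies $(z^j\nad z^{k-j},0)$ by $i(k+\frac12-2j)$ and $(0,\nad z^jz^{k-j})$ by $i(-k-\frac12+2j),$ so these one-dimensional $\so(2)$-modules are irreducible with the stated highest weights.

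The only delicate points are bookkeeping ones: getting the signs in the left action of $e_1,e_2,e_{12}$ on the primitive idempotent $I_1$ right, and fixing the normalization convention under which an $L_{12}$-eigenvalue $i\la$ corresponds to the highest weight $\la.$ There is no real obstacle beyond this. The one conceptual remark worth stressing is that the dimension-$2$ Fischer decomposition has to use the variable $z$ on the $S^{+}$-side but $\nad z$ on the $S^{-}$-side, rather than the single vector variable $x=x_1e_1+x_2e_2$ used in higher dimensions, precisely because $x$ interchanges $S^{+}$ and $S^{-}$ and so no power of $x$ maps $\cP_k(\bR^2,S^{\pm})$ into itself.
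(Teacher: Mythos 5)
Your proof is correct and follows essentially the same route as the paper's: both derive $\pa P=2(-\pa_{\nad z}P^-,\pa_z P^+)$ from the Clifford action on $(I_1,w_1I_1)$, read off the monogenic spaces and the monomial Fischer decomposition directly, and compute the weights as eigenvalues of $H=-iL_{12}$. You simply spell out the Witt-basis table and the $S^-$-side computation more explicitly than the paper, which leaves the latter to an ``analogous proof.''
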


\begin{proof}
Let $P\in\cP_k(\bR^2,S)$ and $P=(P^{+},P^{-}).$ Denote
$$\frac{\pa\ }{\pa z}=\frac 12(\frac{\pa\ }{\pa x_1}-i\frac{\pa\ }{\pa
x_2})\text{\ \ \ and\ \ \ } \frac{\pa\ }{\pa \nad z}=\frac
12(\frac{\pa\ }{\pa x_1}+i\frac{\pa\ }{\pa x_2}).$$ Since
$e_1P=(-P^{-},P^{+}),$ $e_{12}P=(iP^{+},-iP^{-})$ and
$\pa=e_1(\frac{\pa\ }{\pa x_1}-e_{12}\frac{\pa\ }{\pa x_2})$ we have
that $$\pa P=2(-\frac{\pa P^{-}}{\pa \nad z},\frac{\pa P^{+}}{\pa
z}).$$ Assume now that $P$ is $S^{+}$-valued, that is, $P=(P^{+},0)$
and $$P^{+}(z,\nad z)=\sum_{j=0}^k a_j z^j\nad z^{k-j}\ \
(a_j\in\bC) .$$ Obviously, $\pa P=0$ if and only if $P^{+}=a_k\nad
z^k.$ Hence it remains to show that the module $z^j\cM^{2,+}_{k-j}$
has the highest weight $k+\frac 12 -2j.$ But it follows from the
fact that weights are just eigenvalues of the operator $H=-iL_{12}$
and $$H((z^j\nad z^{k-j},0))=(k+\frac 12 -2j)(z^j\nad z^{k-j},0).$$
For $S^{-}$-valued polynomials, an analogous proof works.
\end{proof}

The decompositions of the spaces $\cP^+_k=\cP_k(\bR^2,S^+)$ are
depicted in columns of Figure \ref{fig1}. In this diagram, we write $z^j\nad
z^k$ for $(z^j\nad z^k,0).$  Moreover, all irreducible submodules
with the same highest weight are contained in the row labeled by
this highest weight.

\begin{figure}[htb]
\centerline{
\xymatrix{
         &\cP_0^+ & \cP_1^+ &\cP_2^+ &\cP_3^+ &\cP_4^+ &\\
\frac 72&                 &&& \lz \nad z^3 \pz && \cdots \\
\frac 52&          && \lz \nad z^2\pz && \lz z\nad z^3 \pz\\
\frac 32&     & \lz \nad z \pz && \lz z\nad z^2 \pz && \cdots\\
\frac 12 & \lz 1 \pz && \lz z\nad z \pz && \lz z^2\nad z^2 \pz \\
-\frac 12&     & \lz z \pz && \lz z^2\nad z \pz && \cdots\\
-\frac 32&          && \lz z^2 \pz && \lz z^3\nad z \pz\\
-\frac 52&                &&& \lz z^3 \pz && \cdots }}
\caption{The decomposition of the modules
$\cP^+_k=\cP_k(\bR^2,S^+)$.}
\label{fig1}
\end{figure}

Of course, an analogous diagram can be created for $S^-$-valued polynomials.
But, in this case, labels of rows of the diagram are shifted. In particular, the row beginning with $\lz 1 \pz$ is labeled by $-1/2.$

\paragraph{GT bases for the dimension $m=3$}
In this paragraph, we obtain explicit formulae for the GT bases of
spinor valued spherical monogenics in dimension 3.
In this case, we have that $S\simeq S^{\pm}_4,$ $\so(3)=\lz e_{12}, e_{23}, e_{31}\pz$ and
$\so(2)=\lz e_{12} \pz.$ Furthermore, the action of $\so(3)$ on the space $\cP_k(\bR^3,S)$ is given by
$$L_{ij}=dL(e_{ij}/2)=\frac{e_{ij}}{2}+x_j\frac{\pa\ }{\pa x_i}-x_i\frac{\pa\ }{\pa x_j}\ \ \ (i\not=j).$$
As a~$\so(2)$-module, the module $S$ is
reducible and decomposes into two inequivalent irreducible
submodules $S=S^{+}\oplus S^{-}$ with
$$S^{\pm}=\{u\in S: -ie_{12}\;u=\pm u\}.$$
Let $v^{\pm}$ be generators of $S^{\pm},$ that is, $S^{\pm}=\lz v^{\pm}\pz.$
We can construct a~GT basis in this case using Proposition \ref{ck}
and Theorem \ref{decomp2}.

\begin{thm}\label{GT3}
For each $k\in\bN_0,$ the polynomials
$$f^k_{2j}=e^{x_3e_3\pod\pa}\;(\frac{z^j \nad z^{k-j}}{j!(k-j)!}\;v^+)\text{\ \ and\ \ }
f^k_{2j+1}=e^{x_3e_3\pod\pa}\;(\frac{z^j \nad z^{k-j}}{j!(k-j)!}\;v^-),\ \ j=0,\ldots,k$$
form a~GT basis of the
irreducible $\so(3)$-module $\cM_k(\bR^3,S).$ Moreover, for each
$j=0,\ldots,2k+1,$ the polynomial $f_j^k$ is a~weight vector with
the weight $k+\frac 12-j,$ that is, putting $H=-iL_{12},$ we have
that $Hf_j^k=(k+\frac 12-j)f_j^k.$
\end{thm}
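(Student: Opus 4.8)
The plan is to deduce the whole statement from the single Lie-algebra reduction step $\so(3)\supset\so(2)$, running the construction of Section \ref{sCK} by hand in this bottom case — note that dimension $3$ is exactly the base case of that induction and is \emph{not} covered by Theorem \ref{odddecomp} (stated only for $n\geq 2$) — with the two-dimensional Fischer decomposition (Theorem \ref{decomp2}) playing the role of the higher-dimensional one. First I would apply Proposition \ref{ck} with $m=3$: the operator $CK=e^{x_3e_3\pod\pa}$ is an $\so(2)$-invariant linear isomorphism of $\cP_k(\bR^2,S)$ onto $\cM_k(\bR^3,S)$. Using the identification of $S$ with pairs $(s^{+},s^{-})$ from Theorem \ref{decomp2} (so $v^{+}$ and $v^{-}$ are nonzero multiples of $I_1$ and $w_1I_1$), the polynomials $\tfrac{z^j\nad z^{k-j}}{j!(k-j)!}v^{+}$ and $\tfrac{z^j\nad z^{k-j}}{j!(k-j)!}v^{-}$, $j=0,\dots,k$, are nonzero multiples of $(z^j\nad z^{k-j},0)$ and $(0,z^j\nad z^{k-j})$, which together form a basis of $\cP_k(\bR^2,S)$. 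Applying the isomorphism $CK$ then shows immediately that $\{f^k_i:i=0,\dots,2k+1\}$ is a basis of $\cM_k(\bR^3,S)$; in particular each $f^k_i$ is nonzero.

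Next I would compute the weights. By the weight computation already performed inside the proof of Theorem \ref{decomp2} (equivalently, directly from $H=-iL_{12}$ with $L_{12}=\tfrac{e_{12}}{2}+x_2\tfrac{\pa\ }{\pa x_1}-x_1\tfrac{\pa\ }{\pa x_2}$), the monomial $z^j\nad z^{k-j}v^{+}=(z^j\nad z^{k-j},0)$ is an $\so(2)$-weight vector of weight $k+\tfrac12-2j$, while $z^j\nad z^{k-j}v^{-}=(0,z^j\nad z^{k-j})$ has weight $k-\tfrac12-2j$. Since $CK$ is $\so(2)$-invariant it intertwines the two $H$-actions, hence sends a weight vector to a weight vector of the same weight; therefore $Hf^k_{2j}=(k+\tfrac12-2j)f^k_{2j}$ and $Hf^k_{2j+1}=(k+\tfrac12-(2j+1))f^k_{2j+1}$, i.e.\ $Hf^k_i=(k+\tfrac12-i)f^k_i$ for all $i=0,\dots,2k+1$, which is the weight assertion of the theorem.

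To finish, I would note that the $2k+2$ numbers $k+\tfrac12-i$, $i=0,\dots,2k+1$, are pairwise distinct, so the $f^k_i$ lie in $2k+2$ distinct one-dimensional weight spaces of $\cM_k(\bR^3,S)$. For the chain $\so(3)\supset\so(2)$ these weight spaces are precisely the irreducible $\so(2)$-submodules $V(\mu)$ of the decomposition \eqref{branch+}, and the GT patterns $\mu=(\mu_3,\mu_2)$ with $\mu_3$ the highest weight of $\cM_k(\bR^3,S)$ are in bijection with them; hence $\{f^k_i\}$ picks exactly one nonzero vector from each $V(\mu)$ and is, by definition, a GT basis. Orthogonality is automatic — either from the orthogonality of \eqref{branch+} with respect to any invariant inner product, or simply because weight vectors with pairwise distinct weights are orthogonal.

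There is no genuine analytic obstacle: all the substance is contained in Proposition \ref{ck} and Theorem \ref{decomp2}. The one point that calls for care is the bookkeeping — dimension $3$ lies outside the range of Theorem \ref{odddecomp}, so the $\so(3)\supset\so(2)$ step must be done directly, and the ``$S^{+}$-ladder'' of weights $k+\tfrac12-2j$ ($j=0,\dots,k$) must be correctly interleaved with the ``$S^{-}$-ladder'' $k-\tfrac12-2j$ ($j=0,\dots,k$) to recover the single indexed list $k+\tfrac12-i$ ($i=0,\dots,2k+1$) of the statement.
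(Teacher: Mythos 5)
Your proposal is correct and takes exactly the route the paper indicates: the paper does not give a displayed proof of Theorem \ref{GT3}, but the sentence immediately preceding it (``We can construct a GT basis in this case using Proposition \ref{ck} and Theorem \ref{decomp2}'') is precisely the Prop.~\ref{ck}-plus-Thm.~\ref{decomp2} assembly you carry out, including the observation that the $\so(2)$-invariance of $CK$ transports the weight-vector property and that the $2k+2$ resulting weights $k+\tfrac12-i$ are distinct, so the images land in distinct one-dimensional weight spaces. Your remark that dimension $3$ falls outside Theorem \ref{odddecomp} ($n\geq 2$) and must therefore be handled directly from the $\so(3)\supset\so(2)$ step is also how the paper organizes things.
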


It is not difficult to express the GT bases from Theorem \ref{GT3} even more explicitly. To do this we identify the space $S$ with $\bC^2.$ Indeed, each $s\in S$ is of the form $$s=s^+v^++s^-v^-$$ for some complex numbers $s^+$ and $s^-.$ We write $s=(s^+,s^-)$ for short.
For the sake of explicitness, we limit ourselves to the case when $S=S^+_4$ or $S=S^-_4.$
In the former case, we put
$v^{+}=I$ and $v^{-}=w_1w_2I.$
In the latter case, we put
$v^{+}=w_2I$ and $v^{-}=w_1I.$
In these cases, explicit formulae for GT-bases are given in Corollary \ref{corGT3} below.

\begin{cor}\label{corGT3}
Let $\{f^{k,\pm}_0, \ldots, f^{k,\pm}_{2k+1}\}$ be the GT bases of
$\cM_k(\bR^3,S^{\pm}_4)$ defined in Theorem \ref{GT3}.

\medskip\noindent
(a) For each
$k\in\bN_0$ and $j=0,\ldots,k,$ we have that
$$f^{k,\pm}_{2j}=(p^k_j,\mp q^k_j)
\text{\ \ \ and\ \ \ }f^{k,\pm}_{2j+1}=(\pm q^k_{j+1},\; p^k_j)$$ where
$$p^k_j(z,\nad z, x_3)=\sum_{s=0}^{\min(j,k-j)}(-1)^s\frac{(2x_3)^{2s}\;z^{j-s}\;\nad z^{k-j-s}}{(2s)!(j-s)!(k-j-s)!}\text{\ \ \ and}$$
$$q^k_j(z,\nad z, x_3)=\sum_{s=0}^{\min(j-1,k-j)}(-1)^s\frac{(2x_3)^{2s+1}\;z^{j-1-s}\;\nad z^{k-j-s}}{(2s+1)!(j-1-s)!(k-j-s)!}.$$
Here $q^k_0=0=q^k_{k+1}.$

\medskip\noindent
(b) Moreover, for each $k\in\bN,$ we have that
\begin{eqnarray*}
\frac{\pa f^{k,\pm}_j}{\pa x_3}
&=& \left\{
\begin{array}{ll}
\mp(-1)^j 2\; f^{k-1,\pm}_{j-1},&\ \ \ j=1,\ldots,2k;\medskip\\{}
0,&\ \ \ j=0,2k+1;
\end{array}
\right.
\\
\frac{\pa f^{k,\pm}_j}{\pa z}
&=& \left\{
\begin{array}{ll}
f^{k-1,\pm}_{j-2},&\ \ \ j=2,\ldots,2k+1;\medskip\\{}
0,&\ \ \ j=0,1;
\end{array}
\right.
\\
\frac{\pa f^{k,\pm}_j}{\pa \nad z}
&=& \left\{
\begin{array}{ll}
f^{k-1,\pm}_{j},&\ \ \ j=0,\ldots,2k-1;\medskip\\{}
0,&\ \ \ j=2k,2k+1.
\end{array}
\right.
\end{eqnarray*}

\medskip\noindent
(c) Finally, for $k\in\bN_0$ and $j=0,\ldots,2k+1,$ we have that
$$f^{k,\pm}_{2k+1-j}=(-1)^j(f^{k,\pm}_j)^*$$ where $s^*=(-\nad{
s_2},\nad{s_1})$ for each $s=(s_1,s_2)\in S.$
\end{cor}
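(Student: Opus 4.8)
The plan is to derive Corollary \ref{corGT3} directly from Theorem \ref{GT3} by making the Cauchy-Kovalevskaya exponential $e^{x_3e_3\pod\pa}$ explicit in dimension 3, working throughout in the coordinates $z=x_1+ix_2$, $\nad z=x_1-ix_2$. First I would compute the action of the reduced Dirac operator $e_3\pod\pa$ on $S$-valued polynomials written in the form $(P^+,P^-)$. Using $\pod\pa=e_1\pa_{x_1}+e_2\pa_{x_2}$ together with the identifications of $e_1,e_2,e_{12}$ acting on $S=S^+\oplus S^-$ (exactly as in the proof of Theorem \ref{decomp2}, since $S^\pm=\lz v^\pm\pz$ are the $\mp i e_{12}$-eigenspaces), one finds that $e_3\pod\pa$ interchanges the two components and acts as first-order constant-coefficient operators in $\pa_z,\pa_{\nad z}$; concretely $e_3\pod\pa\,(P^+,P^-)$ is (up to explicit signs and a factor $2$) of the form $(\,\mp\pa_{\nad z}P^-,\,\pm\pa_z P^+\,)$ or similar. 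Squaring this operator gives $(e_3\pod\pa)^2=-\Delta_2=-4\pa_z\pa_{\nad z}$ acting diagonally, which is the crucial simplification: it lets us split the exponential series into even and odd parts.

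Next I would apply $e^{x_3e_3\pod\pa}=\cosh(x_3\,e_3\pod\pa)+\sinh(x_3\,e_3\pod\pa)$. On the starting polynomial $\frac{z^j\nad z^{k-j}}{j!(k-j)!}v^\pm$ (which lives in one component), the even part $\cosh$ produces, via $(e_3\pod\pa)^{2s}=(-4\pa_z\pa_{\nad z})^s$, exactly the sum defining $p^k_j$ after one tracks the binomial/factorial bookkeeping $\pa_z^s\pa_{\nad z}^s\,z^j\nad z^{k-j}=\frac{j!}{(j-s)!}\frac{(k-j)!}{(k-j-s)!}z^{j-s}\nad z^{k-j-s}$ and the power $(2x_3)^{2s}/(2s)!$; the odd part $\sinh$ shifts one $z$ or $\nad z$ derivative off and lands in the other component, producing $\mp q^k_j$ or $\pm q^k_{j+1}$ with the power $(2x_3)^{2s+1}/(2s+1)!$. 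Matching the signs coming from the specific choices $v^+=I,v^-=w_1w_2I$ (resp.\ $v^+=w_2I,v^-=w_1I$) and from $e_3\pod\pa$ against the claimed $\mp$'s in part (a) is the one genuinely fiddly point; I would fix the sign convention once by evaluating on the lowest cases $k=0,1$ and then trust the uniform formula. This proves (a).

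Part (b) I would prove straight from the closed forms in (a): differentiating the series for $p^k_j$ and $q^k_j$ term by term in $x_3$, $z$, $\nad z$ and reindexing. For instance $\pa_{x_3}$ brings down a factor $2s$ (resp.\ $2s+1$), killing the $s=0$ term and turning $p^k_j\mapsto$ a multiple of $q^{k-1}_{j}$ and $q^k_j\mapsto$ a multiple of $p^{k-1}_{j-1}$, which reassembles into $\mp(-1)^j2\,f^{k-1,\pm}_{j-1}$; similarly $\pa_z$ and $\pa_{\nad z}$ just lower the corresponding exponent and degree by one, giving $f^{k-1,\pm}_{j-2}$ and $f^{k-1,\pm}_j$ respectively, with the boundary cases falling out because the relevant sum is empty. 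Alternatively, and perhaps more cleanly, one can observe that $\pa_z,\pa_{\nad z},\pa_{x_3}$ all commute with $\pod\pa$ hence with $CK$, reducing (b) to the trivial identities $\pa_z(z^j\nad z^{k-j})=j\,z^{j-1}\nad z^{k-j}$ etc.\ on the dimension-2 generators and the $e_3\pod\pa$-action computed above for the $x_3$-derivative — I would mention both routes and use whichever keeps the signs most transparent. Part (c) is then immediate: the map $s=(s_1,s_2)\mapsto s^*=(-\nad{s_2},\nad{s_1})$ is (a scalar multiple of) the standard $Spin(3)$-invariant structure, it sends $H$-weight $w$ to $H$-weight $-w$, and one checks it intertwines the starting vectors $\frac{z^j\nad z^{k-j}}{j!(k-j)!}v^\pm$ up to the sign $(-1)^j$ while commuting with $CK$ (because $CK$ is $\so(2)$-equivariant and $*$ is built from it), so it must send $f^{k,\pm}_j$ to $\pm(-1)^j f^{k,\pm}_{2k+1-j}$; comparing the explicit formulas in (a) pins the overall sign. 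The main obstacle throughout is purely the sign and normalization bookkeeping in the passage from the abstract $v^\pm$ to the explicit idempotent realizations — there is no conceptual difficulty, only the risk of an off-by-sign, which I would neutralize by checking $k=0$ and $k=1$ explicitly before stating the general identities.
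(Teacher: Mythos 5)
Your proposal is essentially the paper's own proof: compute the explicit action of $e_3\pod\pa$ on $(P^+,P^-)$, expand the CK exponential in even and odd powers (your $\cosh/\sinh$ split is exactly the paper's bookkeeping of $(e_3\pod\pa)^{2s}$ versus $(e_3\pod\pa)^{2s+1}$), read off $p^k_j$ and $q^k_j$, and then verify (b) and (c) directly from the closed forms, with the alternative route to (b) via commutation with $CK$. One small imprecision: $\pa_{x_3}$ does \emph{not} commute with the operator $e^{x_3e_3\pod\pa}$ (it depends on $x_3$); the correct relation, which the paper states and which you seem to have in mind when you append ``and the $e_3\pod\pa$-action computed above for the $x_3$-derivative,'' is $\pa_{x_3}\bigl(e^{x_3e_3\pod\pa}P\bigr)=e^{x_3e_3\pod\pa}(e_3\pod\pa P)$, while only $\pa_z$ and $\pa_{\nad z}$ genuinely commute with $CK$.
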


\begin{proof}
Let $S=S^{\pm}_4.$ Obviously, we have that
$$e_3\pod\pa P=e_{31}\frac{\pa P}{\pa x_1}+e_{32}\frac{\pa P}{\pa x_2}=\pm 2(\frac{\pa P_2}{\pa \nad z},-\frac{\pa P_1}{\pa z}).$$
Putting $P^k_j=(\frac{z^j \nad z^{k-j}}{j!(k-j)!}, 0)$ and $Q^k_j=(0, \frac{ z^j\nad z^{k-j}}{j!(k-j)!}),$
we get thus that
$$(e_3\pod\pa)^{2s}P^k_j=(-1)^s2^{2s}P^{k-2s}_{j-s},\ \ \ (e_3\pod\pa)^{2s}Q^k_j=(-1)^s2^{2s}Q^{k-2s}_{j-s},$$
$$(e_3\pod\pa)^{2s+1}P^k_j=\mp(-1)^s2^{2s+1}Q^{k-(2s+1)}_{j-s-1},\ \ \ (e_3\pod\pa)^{2s+1}Q^k_j=\pm(-1)^s2^{2s+1}P^{k-(2s+1)}_{j-s}.$$
Using these relations it is easy to obtain the explicit formulae for
$f^{k,\pm}_j.$ Obviously, the statements ($b$) and ($c$) can be verified directly using these explicit formul\ae. On the other hand, the property ($b$) follows also from the following formula
$$\frac{\pa\ }{\pa x_3}(e^{x_3e_3\pod\pa}P)=e^{x_3e_3\pod\pa}(e_3\pod\pa P)$$
and from the fact that the derivatives $\pa/\pa z$ and $\pa/\pa\nad z$ both commute with the CK extension operator $e^{x_3e_3\pod\pa}.$
\end{proof}

\begin{rem}
It is easy to express the elements $f^{k,\pm}_j$ of the GT bases from Corollary \ref{corGT3} in terms of hypergeometric series $\leftsub{2}{F_1}$ or Jacobi polynomials, see \cite[pp.\ 64 and 99]{AAR}. Indeed, we have that
\begin{eqnarray*}
p^k_j &=& \leftsub{2}{F_1}(-j,-k+j,\frac 12; -\frac{x_3^2}{|z|^2})\; \frac{z^j \nad z^{k-j}}{j!(k-j)!},\\
q^k_j &=& \leftsub{2}{F_1}(-j+1,-k+j,\frac 32; -\frac{x_3^2}{|z|^2})\; \frac{2x_3\;z^{j-1} \nad z^{k-j}}{(j-1)!(k-j)!}.
\end{eqnarray*}
Here $|z|^2=z\nad z$ and the hypergeometric series $\leftsub{2}{F_1}(a,b,c; y)$ is given by
$$\leftsub{2}{F_1}(a,b,c; y)=\sum_{s=0}^{\infty}\frac{(a)_s(b)_s}{(c)_s s!}y^s.$$
\end{rem}

In Figure \ref{fig2}, structural properties of the GT basis in this case are shown.
In the $k$-th column of Figure \ref{fig2}, the decomposition of
the $\so(3)$-module $$\cM_k=\cM_k(\bR^3,S)$$ into irreducible
$\so(2)$-submodules can be found. Moreover, all irreducible
$\so(2)$-submodules with the same highest weight are contained in
the row labeled by this highest weight. By Theorem \ref{GT3}, it is
easy to see that Figure \ref{fig2} is, in an obvious sense, composed of the
diagrams for $S^+$ and $S^-$-valued polynomials in $\bR^2$ (see
Figure \ref{fig1}). By Corollary \ref{corGT3}, we know that the application of the derivative $\pa/\pa
x_3$ to the elements of the GT basis causes the shift in the given
row to the left, the derivative $\pa/\pa\nad z$ moves them diagonally downward and $\pa/\pa z$ diagonally
upward.
In other words, the GT bases in this case possess an Appell property not only w.r.t.\ the last real variable $x_3$ but also w.r.t.\ the complex variables $z$ and $\nad z.$
Moreover, the upper triangle in Figure \ref{fig2} is mapped onto the lower one
by the transformation $(\cdot)^*.$

\begin{figure}[htb]
\centerline{ \xymatrix{
         &\cM_0 & \cM_1 &\cM_2 & \cdots\\
\frac 52&          && \lz f^2_0 \pz \ar[dl]_{\frac{\pa\ }{\pa \nad z}} \\
\frac 32&     & \lz f^1_0 \pz\ar[dl] & \lz f^2_1 \ar[l]\ar[dl]\pz & \cdots\\
\frac 12 & \lz f^0_0 \pz & \lz f^1_1 \ar[l]_{\ \frac{\pa\ }{\pa x_3}}\ar[dl] \pz & \lz f^2_2 \ar[l]\ar[ul]\ar[dl]\pz \\
-\frac 12& \lz f^0_1 \pz & \lz f^1_2 \ar[l]^{\ \frac{\pa\ }{\pa x_3}}\ar[ul]\pz & \lz f^2_3 \ar[l]\ar[dl]\ar[ul] \pz & \cdots\\
-\frac 32&    & \lz f^1_3 \pz\ar[ul] & \lz f^2_4 \ar[l]\ar[ul]\pz \\
-\frac 52&         && \lz f^2_5 \pz\ar[ul]^{\frac{\pa\ }{\pa z}} &
\cdots }}
\caption{The decomposition of the modules $\cM_k=\cM_k(\bR^3,S)$.}
\label{fig2}
\end{figure}

\begin{rem}\label{RemarkOperators}
 Let $S=S^{\pm}_4.$ It is not difficult to find non-zero constants $d^{k,\pm}_j$ such that
the polynomials $\hat f^{k}_j=d^{k,\pm}_jf^{k,\pm}_j$ satisfy the following properties
\begin{equation}\label{appell3}
\hat f^{k}_0=\nad z^k v^+,\ \hat f^{k}_{2k+1}= z^k v^-\text{\ and\ }
\frac{\pa \hat f^{k}_j}{\pa x_3}
=\left\{
\begin{array}{ll}
k\; \hat f^{k-1}_{j-1},&\ \ \ j=1,\ldots,2k;\medskip\\{}
0,&\ \ \ j=0,2k+1.
\end{array}
\right.
\end{equation}
Indeed, it is sufficient and necessary to put, for each $j=0,\ldots,k,$
$$d^{k,\pm}_j=(\mp 1)^j(-1)^{(j+1)j/2}\;2^{-j}\;k!\mand d^{k,\pm}_{2k+1-j}=(-1)^j d^k_j.$$
Moreover, we have obviously that
\begin{equation}
\hat f^{k}_{2k+1-j}=(\hat f^{k}_j)^*,\ \ \
\frac{\pa\hat f^k_j}{\pa z}=a^{k,\pm}_j \hat f^{k-1}_{j-2}
\text{\ \ \ and\ \ \ }
\frac{\pa\hat f^{k}_j}{\pa \nad z}=b^{k,\pm}_j\hat f^{k-1}_{j}
\end{equation}
where the constants $a^{k,\pm}_j$ and $b^{k,\pm}_j$ are given by
$$
a^{k,\pm}_j
= \left\{
\begin{array}{ll}
\hfill 0,& j=0,1;\medskip\\{}
-\frac 14 k,& 2\leq j\leq k;\medskip\\{}
\mp\frac 12 k,& j=k+1;\medskip\\{}
\hfill k,& k+2\leq j\leq 2k+1;
\end{array}
\right.
b^{k,\pm}_j
= \left\{
\begin{array}{ll}
\hfill k,& 0\leq j\leq k-1;\medskip\\{}
\pm\frac 12 k,& j=k;\medskip\\{}
-\frac 14 k,& k+1\leq j\leq 2k-1;\medskip\\{}
\hfill 0,& j=2k,2k+1
\end{array}
\right.
$$
Furthermore, by the definition of GT bases and their structural properties shown in Figure \ref{fig2}, it is clear that, for $k\in\bN_0,$ the sets
$$\{\hat f^{k}_j|\ j=0,\ldots,2k+1\}$$ are the GT bases of the modules
$\cM_k(\bR^3,S),$ uniquely determined by the property (\ref{appell3}) and the condition that,
for $j=0,\ldots,2k+1,$
$$H\hat f_j^k=(k+\frac 12-j)\hat f_j^k\text{\ \ \ with\ \ \ }H=-iL_{12}.$$
 \end{rem}

\begin{comment}
Now we can  define the Taylor series expansion of functions from the complex vector space
$L_{2}(\bB_3,S) \cap \ker \pa.$ Here $L_{2}(\bB_3,S)$ stands for the space of square-integrable
$S$-valued functions defined on the unit ball $\bB_3.$ Indeed, it is well-known that the closure of
the direct sum
$$\bigoplus_{k=0}^{\infty} \cM_k(\bR^3,S)$$
with respect to the $L_2$-inner product \eqref{L2product} is just the space $L_{2}(\bB_3,S) \cap \ker \pa.$
Hence, using the GT bases described in the Remark above, we get obviously the following  expansion for
functions from $L_{2}(\bB_3,S) \cap \ker \pa.$

\begin{Def}[Taylor series in $L_{2}(\bB_3,S) \cap \ker \pa$]\label{T_series_spin}
Let $f\in L_{2}(\bB_3,S) \cap \ker \pa$. The series representation
\begin{equation}\label{eTaylor_spinor}
f = \sum_{k=0}^{\infty}\sum_{j=0}^{2k+1} \mathbf{t}^{k}_j \;\hat f^{k}_j
\end{equation}
with the complex coefficients $\mathbf{t}^{k}_j$ such that
\begin{eqnarray*}
\mathbf{t}^{k}_j\;v^+ &=&
\frac{1}{k!}\,\frac{\pa^k f(x)}{\pa x_3^j\;\pa \nad z^{k-j}}\,\Bigl|_{x=0}\text{\ \ for\ \ }j=0,\ldots,k;\medskip\\{}
\mathbf{t}^{k}_j\; v^- &=&
\frac{1}{k!}\,\frac{\pa^k f(x)}{\pa x_3^{2k+1-j}\;\pa z^{j-k-1}}\,\Bigl|_{x=0}\text{\ \ for\ \ }j=k+1,\ldots,2k+1.
\end{eqnarray*}
is called generalized Taylor series in $L_{2}(\bB_3,S) \cap \ker \pa$.
\end{Def}

Let us note that the partial derivatives $\pa/\pa x_3,$ $\pa/\pa z$ and $\pa/\pa \nad z$ commute with each other.
\end{comment}

\section{Quaternion valued polynomials in $\bR^3$}

In this section, we reformulate the GT bases obtained in the previous section for quaternion valued spherical monogenics.

\paragraph{Quaternionic formulation}

In what follows, $\bH$ stands for the skew field of real quaternions
$q$ with the imaginary units $i_1,$ $i_2$ and $i_3,$ that is,
$$i_1^2=i_2^2=i_3^2=i_1i_2i_3=-1\text{\ \ and\ \ }q=q_0+q_1i_1+q_2i_2+q_3i_3, (q_0,q_1,q_2,q_3)\in\bR^4.$$
For a~quaternion $q,$ put $\nad q=q_0-q_1i_1-q_2i_2-q_3i_3.$
We realize $\bH$ as the subalgebra of complex
$2\times 2$ matrices of the form
\begin{equation}\label{Hmatrix}
q=\begin{pmatrix}
q_0+iq_3 & -q_2+iq_1\\
q_2+iq_1 & q_0-iq_3
\end{pmatrix}.
\end{equation}
In particular, we have that
$$i_1=\begin{pmatrix}
0 & i\\
i & 0
\end{pmatrix},\ \ \
i_2=\begin{pmatrix}
0 & -1\\
1 & 0
\end{pmatrix}\text{\ \ \ and\ \ \ }
i_3=\begin{pmatrix}
i & 0\\
0 & -i
\end{pmatrix}.$$
If $s=(q_0+iq_3, q_2+iq_1)\in\bC^2,$ then we write $q(s)$ for the quaternion $q$ as in
(\ref{Hmatrix}). For $s=(s_1,s_2)\in\bC^2,$ $q(s)$ is thus the $2\times 2$ matrix which has
$s$ as the first column and $s^*=(-\nad{
s_2},\nad{s_1})$ as the second one.
It is easy to see that $q(s)\;i_2=q(s^*)$ and that $$q(s)=\Re s_1+i_1\Im s_2+i_2\Re s_2+i_3\Im s_1$$ where,
for a~complex number $z,$ we write $\Re z$ for its real part and $\Im z$ for its imaginary part.

Furthermore, we identify $\so(3)$ with $\lz i_1,i_2,i_3\pz$ as follows: $e_{12}\simeq i_3,$ $e_{23}\simeq i_1$ and $e_{31}\simeq i_2.$
Then we realize the basic spinor representation $S$ as the space $\bC^2$ of
column vectors
$$s=\begin{pmatrix}
q_0+iq_3 \\
q_2+iq_1
\end{pmatrix}
.$$
Here the action of $\so(3)$ on $S$ is given by the matrix multiplication from
the left.

Now we are interested in quaternion valued polynomials $Q=Q(y)$ in the
variable $y=(y_0,y_1,y_2)$ of $\bR^3.$
Let us denote by $\cM_k(\bR^3,\bH)$ the space of $\bH$-valued $k$-homogeneous
polynomials $Q$ satisfying the Cauchy-Riemann
equation $DQ=0$ with
$$D=\frac{\pa\ }{\pa y_0}+i_1\frac{\pa\ }{\pa y_1}+i_2\frac{\pa\ }{\pa y_2}.$$
We can consider naturally $\cM_k(\bR^3,\bH)$ as a~right $\bH$-linear
Hilbert space with the $\bH$-valued inner product
$$(Q,R)_{\bH}=\int_{S^2}\nad QR\;d\sigma.$$
Moreover, we can identify $\cM_k(\bR^3,\bH)$ with the $\so(3)$-module $\cM_k(\bR^3,S)$
we have studied in the previous paragraph as follows.
Let $P=P(x)$ be an $S$-valued polynomial in the
variable $x=(x_1,x_2,x_3)$ of $\bR^3.$
We define a~corresponding $\bH$-valued polynomial $Q(P)$ in $\bR^3$ by
\begin{equation}\label{ident}
Q(P)(y_0,y_1,y_2)=q(P)(-y_2, y_1, y_0).
\end{equation}
Then it is easy to see that $Q(P)\in\cM_k(\bR^3,\bH)$ if and only if
$$i_1\frac{\pa P }{\pa x_1}+i_2\frac{\pa P }{\pa x_2}+i_3\frac{\pa P }{\pa x_3}=0,$$
that is, $P\in\cM_k(\bR^3,S).$
In addition, for each $P,R\in\cM_k(\bR^3,S),$ we have that
\begin{equation}\label{Hproduct}
(Q(P),Q(R))_{\bH}=q((P,R)_1,(P^*,R)_1)
\end{equation}
where $(\cdot,\cdot)_1$ is the complex valued inner product defined
as in (\ref{L2product}).
Using the identification (\ref{ident}) and Theorem \ref{GT3}, we
obtain easily orthogonal bases of quaternion valued spherical
monogenics.

\begin{thm}\label{appell}
For each $k\in\bN_0,$ there exists an orthogonal basis
\begin{equation}\label{Hbases}
\{g^k_j|\ j=0,\ldots,k\}
\end{equation}
of the right $\bH$-linear
Hilbert space $\cM_k(\bR^3,\bH)$ such that:

\medskip\noindent
(i) For $j=0,\ldots,k,$ let $h^k_j$ and $h^k_{2k+1-j}$ be the first and the second column of the (matrix valued) polynomial $g^k_j,$
respectively. Then, for each $j=0,\ldots2k+1,$ we have that
\begin{equation}\label{eigen}
Hh_j^k=(k+\frac 12-j)h^k_j\text{\ \ \ with\ \ \ }H=-i(\frac{i_3}{2}+y_2\frac{\pa\ }{\pa y_1}-y_1\frac{\pa\ }{\pa y_2}).
\end{equation}

\noindent
(ii) We have that $$\frac{\pa g^k_j}{\pa y_0} =\left\{
\begin{array}{ll}
k g^{k-1}_{j-1},&\ \ \ j=1,\ldots,k;\medskip\\{} 0,&\ \ \
j=0.
\end{array}
\right. $$

\noindent
(iii) For each $k\in\bN_0,$ we have that $g^k_0=(y_1-i_3y_2)^k.$

\medskip\noindent
Moreover, the polynomials $g^k_j$ are determined uniquely by the conditions (i), (ii) and (iii).

\medskip\noindent
In addition, for each $k\in\bN_0,$ the polynomials $$h^k_0,\ h^k_1,\ldots, h^k_{2k+1}$$ form a~GT basis of the $\so(3)$-module $\tilde\cM_k(\bR^3,S)$ of $S$-valued $k$-homogeneous polynomials $h$ in $\bR^3$ satisfying the Cauchy-Riemann equation $Dh=0.$
Moreover, the polynomials $h^k_j$ are determined uniquely by the condition \eqref{eigen}, by the Appell property
\begin{equation}\label{APh}
\frac{\pa h^k_j}{\pa y_0} =\left\{
\begin{array}{ll}
k h^{k-1}_{j-1},&\ \ \ j=1,\ldots,2k;\medskip\\{}
0,&\ \ \ j=0, 2k+1;
\end{array}
\right.
\end{equation}
and by the condition that $h^k_0=(\nad u^k,0)$ and $h^k_{2k+1}=(0, u^k)$
with $u=y_1+iy_2$ and $\nad u=y_1-iy_2.$
\end{thm}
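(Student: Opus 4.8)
\emph{Proof strategy.} The plan is to transport the renormalized spinor Gelfand--Tsetlin basis $\hat f^k_j$ of Remark \ref{RemarkOperators} to the quaternionic picture through the identification \eqref{ident}, and then to read off every assertion of the theorem from the facts already established in Theorem \ref{GT3}, Corollary \ref{corGT3} and Remark \ref{RemarkOperators}. First I would fix the dictionary attached to \eqref{ident}: the substitution $x=(-y_2,y_1,y_0)$ sends $\pa/\pa x_3$ to $\pa/\pa y_0$ and sends $z=x_1+ix_2,\ \nad z=x_1-ix_2$ to $iu,\ -i\nad u$ respectively, where $u=y_1+iy_2$; moreover, as already noted in the text, it intertwines the operator $i_1\pa_{x_1}+i_2\pa_{x_2}+i_3\pa_{x_3}$ on $S$-valued polynomials with the Cauchy--Riemann operator $D$ up to left multiplication by the invertible element $i_3$, and it carries $-iL_{12}$ to the operator $H$ of \eqref{eigen}. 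Hence this substitution is an $\so(3)$-module isomorphism of $\cM_k(\bR^3,S)$ onto $\tilde\cM_k(\bR^3,S)$; in particular $\tilde\cM_k(\bR^3,S)$ is the irreducible module of Theorem \ref{GT3}, with the weights $k+\tfrac12-j$, $j=0,\dots,2k+1$, each of multiplicity one.

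Since a Gelfand--Tsetlin basis is unique only up to a nonzero scalar on each one-dimensional weight component, the substance of the argument is the choice of normalization. I would look for constants $c^k_j$ ($j=0,\dots,2k+1$) so that $P^k_j:=c^k_j\hat f^k_j$ is still a Gelfand--Tsetlin basis of $\cM_k(\bR^3,S)$ while the polynomials $h^k_j:=P^k_j(-y_2,y_1,y_0)$ satisfy $h^k_0=(\nad u^k,0)$, $h^k_{2k+1}=(0,u^k)$ and the Appell relation \eqref{APh}. Running the normalization $\hat f^k_0=\nad z^k v^+$ and the Appell property \eqref{appell3} through the dictionary forces $c^k_j=i^{\,k-j}$ for $0\le j\le k$ and $c^k_j=(-i)^{\,j-k-1}$ for $k+1\le j\le 2k+1$; using $\hat f^k_{2k+1-j}=(\hat f^k_j)^*$ and $q(s)\,i_2=q(s^*)$ one then checks that these constants are mutually consistent, that $c^k_j=c^{k-1}_{j-1}$ for $1\le j\le 2k$ --- which is precisely what makes \eqref{APh} come out with the coefficient $k$ and no stray power of $i$ --- and that $\nad{c^k_j}=c^k_{2k+1-j}$, which is what makes the two columns of the matrix $Q(P^k_j)$ pair up as $h^k_j$ and $h^k_{2k+1-j}$. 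I expect this bookkeeping with powers of $\pm i$ to be the only delicate point.

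With the $c^k_j$ in hand I would set $g^k_j:=Q(P^k_j)$ for $j=0,\dots,k$, which lies in $\cM_k(\bR^3,\bH)$ because $P^k_j\in\cM_k(\bR^3,S)$. Property (i) is \eqref{eigen} transported from Theorem \ref{GT3}; property (ii) follows, for $1\le j\le k$, from $\pa_{x_3}\hat f^k_j=k\,\hat f^{k-1}_{j-1}$ (equivalently \eqref{appell3}) applied to both $\hat f^k_j$ and $\hat f^k_{2k+1-j}$, together with $c^k_j=c^{k-1}_{j-1}$, giving $\pa_{y_0}q(P^k_j)=k\,q(P^{k-1}_{j-1})$ and hence $\pa_{y_0}g^k_j=k\,g^{k-1}_{j-1}$; property (iii) is the computation $q((\nad u^k,0))=\mathrm{diag}(\nad u^k,u^k)=(y_1-i_3y_2)^k$. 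Orthogonality comes from \eqref{Hproduct}: for $i\ne j$ with $0\le i,j\le k$ the polynomials $P^k_i,P^k_j$ lie in distinct weight spaces of an irreducible module, hence are orthogonal for the $Spin$-invariant product $(\cdot,\cdot)_1$, while $(P^k_i)^*$ is a multiple of $\hat f^k_{2k+1-i}$, of weight $i-k-\tfrac12$, which differs from the weight of $P^k_j$ precisely because $i+j\le 2k<2k+1$; thus $(g^k_i,g^k_j)_{\bH}=q(0,0)=0$. Since $\cM_k(\bR^3,\bH)$ is, via \eqref{ident}, real-linearly isomorphic to $\cM_k(\bR^3,S)$, whose complex dimension is $2k+2$ by Theorem \ref{GT3}, it has $\bH$-dimension $k+1$, so the $k+1$ nonzero pairwise orthogonal polynomials $g^k_j$ form an orthogonal basis.

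For the uniqueness statements I would argue by induction on $k$. The case $k=0$ is forced by (iii) (equivalently, by the prescribed $h^0_0,h^0_1$). For $k\ge1$, (iii) determines $g^k_0$, and for $1\le j\le k$ the matrix $g^k_j=Q(P)$ is determined by its first column $h^k_j$, which by (i) lies in a one-dimensional weight space; since $\pa_{y_0}$ preserves weights and, by Corollary \ref{corGT3}(b), annihilates a weight vector of $\tilde\cM_k(\bR^3,S)$ only at the two extreme weights, the relation $\pa_{y_0}g^k_j=k\,g^{k-1}_{j-1}$ with $g^{k-1}_{j-1}$ already known fixes the remaining scalar. The same induction, now using \eqref{APh} and the prescribed values of $h^k_0,h^k_{2k+1}$, gives uniqueness of the family $\{h^k_j\}_{j=0}^{2k+1}$; and this family is a Gelfand--Tsetlin basis of $\tilde\cM_k(\bR^3,S)$ because each $h^k_j$ is a nonzero multiple of the image of $\hat f^k_j$ under the isomorphism $\cM_k(\bR^3,S)\to\tilde\cM_k(\bR^3,S)$ of the first paragraph, while $\{\hat f^k_j\}$ is a Gelfand--Tsetlin basis of $\cM_k(\bR^3,S)$ by Remark \ref{RemarkOperators}.
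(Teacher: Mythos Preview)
Your proposal is correct and follows essentially the same route as the paper's proof: transport the spinor GT basis to $\tilde\cM_k(\bR^3,S)$ via the substitution \eqref{ident}, choose scalar normalizations to enforce the Appell property and the initial values $h^k_0,h^k_{2k+1}$, set $g^k_j=q(h^k_j)$, read off orthogonality from \eqref{Hproduct}, and deduce uniqueness from the one-dimensionality of the weight spaces. The only cosmetic difference is that you start from the already Appell-normalized $\hat f^k_j$ of Remark \ref{RemarkOperators}, so your residual constants are the clean $c^k_j=i^{\,k-j}$ (resp.\ $(-i)^{\,j-k-1}$), whereas the paper starts from the unnormalized $f^{k,-}_j$ and therefore carries the factors $(-1)^{(j+1)j/2}2^{-j}k!$ explicitly; composing your $c^k_j$ with $d^{k,-}_j$ recovers the paper's constants exactly.
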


\begin{proof}
(a) We first construct GT bases of $S$-valued monogenic polynomials in $\bR^3$
by applying Theorem \ref{GT3}.
Indeed, for $P\in\cM_k(\bR^3,S),$ we have that
$$i_2\frac{\pa P }{\pa x_1}-i_1\frac{\pa P}{\pa
x_2}=2(-\frac{\pa P_2 }{\pa \nad z}, \frac{\pa P_1}{\pa z}).$$
As in the proof of Corollary \ref{corGT3}, we get easily that the
set
$$\{f^{k,-}_0, \ldots, f^{k,-}_{2k+1}\}$$ is a~GT basis of
$\cM_k(\bR^3,S).$

\medskip\noindent
(b) For each $k\in\bN_0$ and $j=0,\ldots,2k+1,$ put
$$\hat h^k_j(y_0,y_1,y_2)=(f^{k,-}_j)(-y_2, y_1, y_0).$$
Obviously, the set $$\{\hat h^k_j|\ j=0,\ldots,2k+1\}$$ is a~GT basis of the module
$\tilde\cM_k(\bR^3,S).$ It is easy to see that
$$\hat h^k_{2j}=(-1)^{k-j}i^k(p^k_j,-iq^k_j) \mand \hat h^k_{2j+1}=(-1)^{k-j}i^k(-iq^k_{j+1},p^k_j)$$
where $p^k_j=p^k_j(u,\nad u, y_0)$ and $q^k_j=q^k_j(u,\nad u, y_0)$ are defined as in Corollary \ref{corGT3}.

\medskip\noindent
(c) We can find non-zero complex numbers
$c^k_j\in\bC$ such that the polynomials $h^k_j=c^k_j\hat h^k_j$ satisfy, in addition, the condition \eqref{APh},
$h^k_0=(\nad u^k,0),$ $h^k_{2k+1}=(0, u^k)$ and
$h^k_{2k+1-j}=(h^k_j)^*.$
Indeed, for each $k\in\bN_0,$ put $c^k_0=i^kk!.$
Moreover, it is easy to see that $$\frac{\pa\hat h^k_j}{\pa y_0}=(-1)^j\;2\hat h^{k-1}_{j-1}.$$
This implies that we need to have $c^k_j=(-1)^j 2^{-1}kc^{k-1}_{j-1}.$ Hence
we are forced to put, for each $j=0,\ldots,k,$
$$c^k_j=(-1)^{(j+1)j/2}\;2^{-j}\;k!\;i^{k-j}\mand c^k_{2k+1-j}=(-1)^j\nad c^k_j.$$

\medskip\noindent
(d) Finally, for each $k\in\bN_0$ and $j=0,\ldots,k,$ define an $\bH$-valued polynomial $g^k_j$ corresponding to the $S$-valued polynomial $h^k_j$ by
$$g^k_j=q(h^k_j).$$
By (c) and (\ref{Hproduct}), we have that the set
$$\{g^k_j|\ j=0,\ldots,k\}$$
is orthogonal with respect to the $\bH$-valued inner product $(\cdot,\cdot)_{\bH}.$
Actually, this set is, in fact, a~basis of the right $\bH$-linear
Hilbert space $\cM_k(\bR^3,\bH)$
because $$g^k_j\;i_2=q(h^k_j)\;i_2=q((h^k_j)^*)=q(h^k_{2k+1-j}).$$
Obviously, the conditions (i), (ii) and (iii) are satisfied.

\medskip\noindent
(e) Since weight vectors of the operator $H$ are determined uniquely up to non-zero multiples the construction gives also the uniqueness of the bases satisfying the conditions (i), (ii) and (iii).
\end{proof}

From the proof of Theorem \ref{appell} we get easily the next result.

\begin{cor}\label{corappell}
Let the set $\{g^k_j|\ j=0,\ldots,k\}$
be the orthogonal basis of the right $\bH$-linear
Hilbert space $\cM_k(\bR^3,\bH)$ as in Theorem \ref{appell}. Then, for each $j=0,\ldots,k,$ we have that
$$
g^k_{j}=\left\{
\begin{array}{ll}
(-1)^l\;k!\;2^{-j}(\Re p^k_l-i_1\Re q^k_l+i_2\Im q^k_l+i_3\Im p^k_l),&\ \ j=2l;\medskip\\{}
(-1)^l\;k!\;2^{-j}(\Re q^k_{l+1}+i_1\Re p^k_l-i_2\Im p^k_l+i_3\Im q^k_{l+1}),&\ \ j=2l+1.
\end{array}
\right.$$
Here $u=y_1+iy_2,$ $\nad u=y_1-iy_2$ and $p^k_j=p^k_j(u,\nad u, y_0),$ $q^k_j=q^k_j(u,\nad u, y_0)$ are complex polynomials defined as in Corollary \ref{corGT3}.
\end{cor}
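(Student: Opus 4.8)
The plan is simply to substitute into one another the explicit pieces of data assembled during the proof of Theorem~\ref{appell}. Recall from part~(b) of that proof that, writing $p^k_l=p^k_l(u,\nad u,y_0)$ and $q^k_l=q^k_l(u,\nad u,y_0)$ for the polynomials of Corollary~\ref{corGT3},
$$\hat h^k_{2l}=(-1)^{k-l}i^k(p^k_l,-iq^k_l)\mand\hat h^k_{2l+1}=(-1)^{k-l}i^k(-iq^k_{l+1},p^k_l);$$
from part~(c) that $h^k_j=c^k_j\hat h^k_j$ with $c^k_j=(-1)^{(j+1)j/2}2^{-j}k!\,i^{k-j}$ for $j=0,\ldots,k$; and from part~(d), together with the definition of $q(\cdot)$, that $g^k_j=q(h^k_j)=\Re s_1+i_1\Im s_2+i_2\Re s_2+i_3\Im s_1$, where $(s_1,s_2)=h^k_j$.

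First I would collapse the scalar prefactor. Multiplying $c^k_j$ by the factor $(-1)^{k-l}i^k$ coming from $\hat h^k_j$ and using $(-1)^{(2l+1)l}=(-1)^l$, $(-1)^{(2l+1)(l+1)}=(-1)^{l+1}$, $i^{2m}=(-1)^m$ and the identity $i(-iq^k_{l+1},p^k_l)=(q^k_{l+1},ip^k_l)$, a direct computation reduces the two cases to
$$h^k_{2l}=(-1)^lk!\,2^{-2l}(p^k_l,-iq^k_l)\mand h^k_{2l+1}=(-1)^lk!\,2^{-(2l+1)}(q^k_{l+1},ip^k_l).$$
In particular the common factor $(-1)^lk!\,2^{-j}$ is real, which is what makes the next step painless.

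It then remains to apply $q(\cdot)$ to these two vectors. Using $\Re(iz)=-\Im z$, $\Im(iz)=\Re z$, $\Re(-iz)=\Im z$ and $\Im(-iz)=-\Re z$, I read off the real and imaginary parts of the two components of $h^k_j$ and plug them into $\Re s_1+i_1\Im s_2+i_2\Re s_2+i_3\Im s_1$; for $j=2l$ this gives $(-1)^lk!\,2^{-j}(\Re p^k_l-i_1\Re q^k_l+i_2\Im q^k_l+i_3\Im p^k_l)$ and for $j=2l+1$ it gives $(-1)^lk!\,2^{-j}(\Re q^k_{l+1}+i_1\Re p^k_l-i_2\Im p^k_l+i_3\Im q^k_{l+1})$, which are exactly the claimed formulae. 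There is no conceptual obstacle here, since everything needed is already contained in the proof of Theorem~\ref{appell}; the only thing to watch carefully is the bookkeeping of the signs and the powers of $i$ in the prefactor.
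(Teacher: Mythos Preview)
Your proof is correct and follows exactly the approach the paper intends: the paper simply states that the corollary follows easily from the proof of Theorem~\ref{appell}, and you have carried out precisely that substitution and sign bookkeeping in detail. There is nothing to add.
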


\begin{rem}
In \cite{lavSL2}, the GT bases for this case are obtained in quite a~different way. In particular,
the elements $g^k_j$ of these bases are expressed in terms of the Legendre polynomials as follows.
Using spherical co-ordinates
\begin{equation*}
y_0=r\cos\theta,\ \
y_1=r\sin\theta\cos\fai,\ \
y_2=r\sin\theta\sin\fai
\end{equation*}
with $0\leq r,$ $-\pi\leq\fai\leq \pi$ and $0\leq\theta\leq\pi,$ we have namely that
$$
g^k_{j}(r,\theta,\fai)=(k!/j!)(-2)^{k-j}r^k\;(g^k_{j,0}+g^k_{j,1}\;i_1+g^k_{j,2}\;i_2+g^k_{j,3}\;i_3)\text{\ \ where}
$$
$$
\begin{array}{ll}
g^k_{j,0}=P^{j-k}_k(\cos \theta)\cos (j-k)\fai,&
g^k_{j,1}=-j P^{j-k-1}_k(\cos \theta) \cos (j-k-1)\fai,\medskip\\{}
g^k_{j,2}=j P^{j-k-1}_k(\cos \theta)\sin (j-k-1)\fai,&
g^k_{j,3}=P^{j-k}_k(\cos \theta)\sin (j-k)\fai.
\end{array}
$$
Here $P^0_k$ is the $k$-th Legendre polynomial and $P^l_k$ are its associated Legendre functions.
\end{rem}

In the last paragraph, we show that the GT bases obtained for quaternion valued spherical monogenics
coincide with those constructed by the first and the second author in \cite{BG}.

\paragraph{Identification of the bases}

The condition (ii) of Theorem \ref{appell} tells us just that the monogenic polynomials $g^k_j$
form an Appell system. In \cite{Bock2009} and \cite[Theorem 7.2]{BG}, an orthogonal Appell system
of quaternion valued spherical monogenics has been recently constructed quite explicitly from an
orthogonal system of real valued spherical harmonics. Further, in \cite{Bock2009} and
\cite{Bock2010a}, very compact recursion formulae have been obtained for the elements of the Appell
basis. From these recursion formulae it becomes also already visible that the wanted Appell system can be constructed without starting with spherical harmonics.
These results are resumed in the following theorem:

\begin{thm}[\cite{Bock2009,BG,Bock2010a}]\label{BG}
The system of inner solid spherical monogenics $\bigl\{
A_{n}^{l}\,:\,l=0,\ldots,n\bigr\}_{n\in\INo}$, where, for each $n\in\IN$ and $l=0,\ldots,n$, the
elements are given by the two-step recurrence formula
\begin{equation}\label{Equation::RecurrenceAppell_Formel_III}
A_{n+1}^{l} = \frac{n+1}{2(n-l+1)(n+l+2)}\left[ \Bigl((2n+3)y + (2n+1)\bar{y}\Bigr)A_{n}^{l} -
2n\,y\overline{y}\,A_{n-1}^{l} \right]
\end{equation}
with
\begin{equation*}
A_{l+1}^{l} \;=\; \frac{1}{4}\bigl[ (2l+3)y + (2l+1)\bar{y}\bigr]\,A_{l}^{l}\quad\text{and}\quad
%%%%%%%%%%%%%%%%%%%%%%%%%%%%%%%%%
A_{l}^{l} \;=\; (y_{1} - i_{3}y_{2})^{l}\,,
\end{equation*}
is an orthogonal Appell basis in $L^{2}(\bB_3,\IH) \cap \ker D$ such that for each $n\in\IN$
\begin{equation*}
\overline{D}_{0} A_{n}^{l} =\left\{
\begin{array}{ccl}
n\,A_{n-1}^{l} & : & l=0,\ldots,n-1\\
0 & : & l=n
\end{array} \right.
\end{equation*}
and
\begin{equation*}
D_{\sIC} A_{n}^{n}  = n\,A_{n-1}^{n-1}
\end{equation*}
hold. Here, $y := y_{0} + i_{1}y_{1} + i_{2}y_{2}$ denotes the reduced quaternion. The used
Cauchy-Riemann operators are defined by
$\overline{D}_{0}:=\frac{1}{2}\left(\frac{\partial}{\partial y_{0}} - i_{1}\frac{\partial}{\partial
y_{1}} - i_{2}\frac{\partial}{\partial y_{2}} \right)$ and $D_{\s\IC} :=
\frac{1}{2}\left(\frac{\partial}{\partial y_{1}} + i_{3}\frac{\partial}{\partial y_{2}}\right)$.
\end{thm}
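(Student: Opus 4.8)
The plan is to obtain the theorem from the Gelfand--Tsetlin construction already carried out, by identifying the family $\{A_n^l\}$ defined by \eqref{Equation::RecurrenceAppell_Formel_III} with the orthogonal basis $\{g^n_j\}$ of $\cM_n(\bR^3,\bH)$ produced in Theorem \ref{appell} under the re-indexing $A_n^l=g^n_{n-l}$ for $n\in\INo$, $l=0,\ldots,n$. This is the only consistent dictionary: $A_l^l=(y_1-i_3y_2)^l$ is exactly $g^l_0$ by Theorem \ref{appell}(iii), and moving up a column of fixed $l$ the operator $\overline D_0$ should lower the degree by one while leaving $l$ unchanged, which is precisely condition (ii) of Theorem \ref{appell}. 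Granting the identification, the remaining assertions are bookkeeping: orthogonality with respect to $(\cdot,\cdot)_{\bH}$ and the right-$\bH$ basis property in each fixed degree are Theorem \ref{appell}, and they pass to $L^2(\bB_3,\bH)\cap\ker D$ because this space is the $L^2$-closure of $\bigoplus_n\cM_n(\bR^3,\bH)$ and homogeneous monogenics of distinct degrees are $L^2$-orthogonal.

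I would then dispose of the two differential identities. Since $Q\in\cM_n(\bR^3,\bH)$ satisfies $DQ=0$, i.e.\ $\pa Q/\pa y_0=-i_1\,\pa Q/\pa y_1-i_2\,\pa Q/\pa y_2$, one has $\overline D_0 Q=\pa Q/\pa y_0$ on this space; combined with Theorem \ref{appell}(ii) this gives $\overline D_0 A_n^l=n A_{n-1}^l$ for $l\le n-1$ and $\overline D_0 A_n^n=0$, as stated. For the diagonal Appell relation, use the matrix realization \eqref{Hmatrix}: as $i_3=\mathrm{diag}(i,-i)$, we get $(y_1-i_3y_2)^n=\mathrm{diag}(\nad u^n,u^n)$ with $u=y_1+iy_2$, and $D_{\sIC}=\tfrac12(\pa/\pa y_1+i_3\,\pa/\pa y_2)$ acts on the two diagonal entries as $\pa/\pa\nad u$ and $\pa/\pa u$, so $D_{\sIC}A_n^n=n(y_1-i_3y_2)^{n-1}=n A_{n-1}^{n-1}$; equivalently this is the $\pa/\pa z,\pa/\pa\nad z$ Appell property of Corollary \ref{corGT3}(b) transported through \eqref{ident}.

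The genuine content, and the step I expect to be the obstacle, is to check that the polynomials $g^n_{n-l}$ really satisfy \eqref{Equation::RecurrenceAppell_Formel_III} together with its seed $g^{l+1}_1=\tfrac14\bigl[(2l+3)y+(2l+1)\nad y\bigr]g^l_0$; once these polynomial identities hold, the obvious uniqueness of a two-step recurrence forces $A_n^l=g^n_{n-l}$ and the proof is complete. The direct attack is to insert the explicit formulae of Corollary \ref{corappell} --- or, most economically, the Jacobi and associated-Legendre representations recorded in the Remarks following Corollaries \ref{corGT3} and \ref{corappell} --- into the bracket on the right-hand side, whereupon \eqref{Equation::RecurrenceAppell_Formel_III} collapses to standard contiguous relations. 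A cleaner but less elementary alternative is structural: using the Clifford-analytic identities for $\pa(xf)$, $\pa(\nad x f)$ and $\pa(|x|^2 f)$ on homogeneous monogenics, show that the right-hand side of \eqref{Equation::RecurrenceAppell_Formel_III} is again monogenic and homogeneous of degree $n+1$; note that multiplication by $y$, $\nad y$ and $y\nad y$ commutes with $H=-i(\tfrac{i_3}{2}+y_2\,\pa/\pa y_1-y_1\,\pa/\pa y_2)$ (a short direct computation), so the columns of the right-hand side are $H$-weight vectors of the same weights as those of $g^{n+1}_{(n-l)+1}$; and conclude by the multiplicity-freeness of the $\so(3)$-weights in $\cM_{n+1}(\bR^3,S)$ and the uniqueness clause of Theorem \ref{appell}, fixing the residual scalar from the value at $y_0=0$ or by one application of $\overline D_0$. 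Either route, verifying this recurrence is the crux; everything else rests on Theorem \ref{appell}.
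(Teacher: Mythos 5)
The paper does not prove Theorem~\ref{BG}; it is stated as a recall of \cite{Bock2009,BG,Bock2010a}, where the $A_n^l$ are constructed from spherical harmonics and the recurrence is derived. The paper's own use of this theorem runs in the \emph{opposite} direction to your plan: taking Theorem~\ref{BG} as known, it observes that the $A_n^l$ satisfy conditions (i)--(iii) of Theorem~\ref{appell} and concludes $g^k_j=A^{k-j}_k$ by the uniqueness clause there. You propose instead to derive Theorem~\ref{BG} \emph{from} Theorem~\ref{appell}, which is a genuinely different route.

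That route is sound, but it has the gap you flag yourself: the crux (that the $g^n_{n-l}$ satisfy \eqref{Equation::RecurrenceAppell_Formel_III}) is not actually carried out by either of your two suggested methods. For the record, your structural alternative does go through. Left multiplication by $y$, $\nad y$ and $y\nad y$ commutes with $H$, since $\tfrac{i_3}{2}y-y\tfrac{i_3}{2}=y_1i_2-y_2i_1$ cancels the commutator coming from the rotation part of $H$. Monogenicity of the right-hand side of \eqref{Equation::RecurrenceAppell_Formel_III} is \emph{not} automatic, but it reduces precisely to the Appell property from Theorem~\ref{appell}(ii): with $R=y_2\,\pa/\pa y_1-y_1\,\pa/\pa y_2$ one computes $D(yA_n^l)=-A_n^l+2i_3RA_n^l$, $D(\nad y\,A_n^l)=3A_n^l-2i_3RA_n^l$ and $D(y\nad y\,A_{n-1}^l)=2y\,A_{n-1}^l$, so the bracket in \eqref{Equation::RecurrenceAppell_Formel_III} has $D$-image $4\bigl(nA_n^l+i_3RA_n^l-ny\,A_{n-1}^l\bigr)$, and this vanishes because the Euler relation together with $DA_n^l=0$ gives $nA_n^l-y\,\pa A_n^l/\pa y_0=-i_3RA_n^l$, while $\pa A_n^l/\pa y_0=n\,A_{n-1}^l$ holds inductively. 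After weight matching, multiplicity-freeness and fixing the scalar (one application of $\overline{D}_0$ suffices), the identification $A_n^l=g^n_{n-l}$ follows and Theorem~\ref{appell} supplies orthogonality and completeness. Until this verification is actually written out, however, the theorem does not follow from Theorem~\ref{appell} alone. The remaining pieces of your proposal --- the matching of initial data, the observation $\overline{D}_0=\pa/\pa y_0$ on $\ker D$, and the diagonal $D_{\sIC}$ computation --- are all correct.
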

At this point, let us remark on some structural properties of the Appell system
(\ref{Equation::RecurrenceAppell_Formel_III}) coming from a very analytical point of view. Firstly,
the two-step recurrence formulae relate Appell polynomials of different degree $n$ however the
index $l$ is fixed. Referring to Figure \ref{Figure::AppellSet}, this structurally means that the
elements of the $(l+1)$-th column are recursively generated by the initial elements $A_{l}^{l}$
which are in fact belonging to the subset of the so-called hyperholomorphic constants. Such
generalized constants are characterized in a quite natural way: A function $f$ is called
hyperholomorphic constant if $f$ belongs to the considered function space $f\in\ker D$ (the space
of monogenic solutions to the Moisil-Teodorescu system) and vanishes after (hypercomplex)
derivation. In this context, we refer again to \cite{Malonek1987} and \cite{Gurlebeck1999}, wherein
the authors have proved that the operator $\overline{D}_{0} = \frac{1}{2} \overline{D}$ corresponds
to the concept of the hypercomplex derivative. Thus a hyperholomorphic constant is analogously
characterized as in the complex one-dimensional case by $f\in \ker \overline{D}_{0} \cap \ker D$.
Secondly, Figure \ref{Figure::AppellSet} further illustrates the action of the differential
operators on the Appell basis $(\ref{Equation::RecurrenceAppell_Formel_III})$.
\begin{figure}[htb]
\begin{center}
\includegraphics[scale=1.6,angle=0]{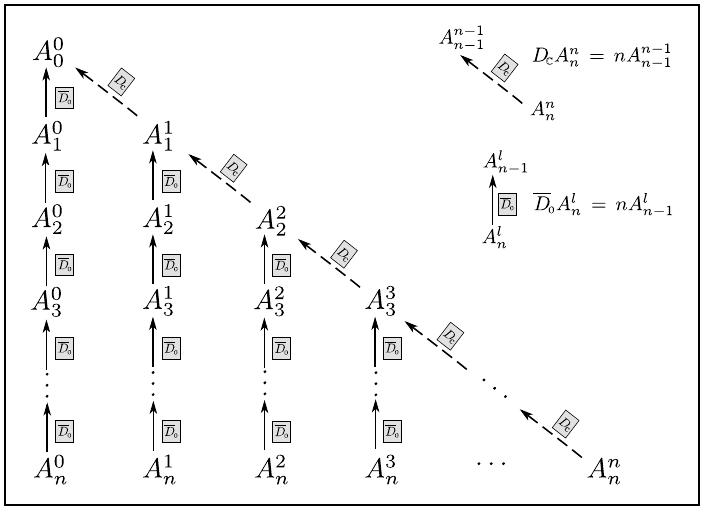}
\caption{ Structural properties of the orthogonal Appell basis $A_{n}^{l}$. }
\label{Figure::AppellSet}
\end{center}
\end{figure}
Precisely, the application of the hypercomplex derivative $\overline{D}_{0}$ to an arbitrary Appell
polynomial $A_{n}^{l}$ causes a shifting of the degree in a fixed column $l$ whereas the
application of the lower dimensional (complex) derivative $D_{\sIC}$ causes a shifting of the
degree as well as a shifting of the column. Here, it should be emphasized that the action of the
differential operator $D_{\sIC}$ is restricted to the set of hyperholomorphic constants and thus,
referring to Figure \ref{Figure::AppellSet}, is mapping along the upper diagonal. As a consequence
of the afore said, one can conclude that for an arbitrary Appell polynomial $A_{n}^{l}$,
$l=0,\ldots,n$, $n\in\INo$ of the system (\ref{Equation::RecurrenceAppell_Formel_III}) first the
$(n-l)$-fold application of $\overline{D}_{0}$ and afterwards the $l$-fold application of
$D_{\sIC}$ yields
\begin{equation*}
D_{\sIC}^{\,l}\,\overline{D}_{0}^{n-l}\,A_{n}^{l} = n!.
\end{equation*}
This property essentially enables the definition of a new Taylor series expansion (see section
\ref{applications}) in terms of the Appell set (\ref{Equation::RecurrenceAppell_Formel_III}) at
first introduced in \cite{Bock2009,BG}. Finally, it is easy to see that the system from Theorem
\ref{BG} satisfies the conditions (i), (ii) and (iii) of Theorem \ref{appell}. Hence using the GT
approach and Theorem \ref{appell} based on it, it is possible to show that $g^k_j=A^{k-j}_k$ for
all $k,j$.

\section{Orthogonal power series expansions}\label{applications}

In view of some practical application of the basis, in \cite{Bock2009,BG}, the latter basis was
particularly used to define a new Taylor series expansion 
which is a~direct consequence of the Appell property of the basis:

\begin{Def}[Taylor series in $L_{2}(\bB_3,\IH) \cap \ker D$]\label{T_series_quat}
Let $f\in L_{2}(\bB_3,\IH) \cap \ker D$. The series representation
\begin{equation}\label{Equation::TaylorSeries}
f := \sum_{n=0}^{\infty}\sum_{l=0}^{n} A_{n}^{l} \mathbf{t}_{n,l},\quad\text{with}\quad
%%%%%%%%%%%%%%%%%%%%%%%%
\mathbf{t}_{n,l} = \frac{1}{n!}\,D_{\sIC}^{\,l}\,\overline{D}_{0}^{n-l}\,f(y)\,\Bigl|_{y=0}
\end{equation}
is called generalized Taylor series in $L_{2}(\bB_3,\IH) \cap \ker D$. The notations
$\overline{D}_{0}^{k}$ and $D_{\sIC}^{\,k}$ indicate the $k$-fold application of the corresponding
differential operators ($k\in\IN$) and the corresponding identity operator ($k=0$), respectively.
\end{Def}
We observe that the Taylor coefficients are given by successive application of the hypercomplex
derivative $\overline{D}_{0}$ to the principal part of the monogenic function and the "complex"
derivative $D_{\sIC}$ to the "constant" part (the subset of hyperholomorphic constants) of the
monogenic function. This Taylor series expansion meets exactly the concept of hypercomplex
derivability and improves Fueter's approach which is based on partial derivatives with respect to
the real variables $x_1$ and $x_2$.

%%%%%%%%%%%%%%%%%%%%%%%%%%%%%%%%%%%%%%%%%%%%%%%%%%
Similarly, in case of spinor valued functions, using again the Appell property of the corresponding GT basis (see Remark \ref{RemarkOperators} at the end of Section 4) we can define the following Taylor series expansion:

\begin{Def}[Taylor series in $L_{2}(\bB_3,S) \cap \ker \pa$]\label{T_series_spin}
Let $f\in L_{2}(\bB_3,S) \cap \ker \pa$. The series representation
\begin{equation}\label{eTaylor_spinor}
f = \sum_{k=0}^{\infty}\sum_{j=0}^{2k+1} \mathbf{t}^{k}_j \;\hat f^{k}_j
\end{equation}
with the complex coefficients $\mathbf{t}^{k}_j$ such that
\begin{eqnarray*}
\mathbf{t}^{k}_j\;v^+ &=& \frac{1}{k!}\,\frac{\pa^k f(x)}{\pa x_3^j\;\pa \nad
z^{k-j}}\,\Bigl|_{x=0}\text{\ \ for\ \ }j=0,\ldots,k;\medskip\\{} \mathbf{t}^{k}_j\; v^- &=&
\frac{1}{k!}\,\frac{\pa^k f(x)}{\pa x_3^{2k+1-j}\;\pa z^{j-k-1}}\,\Bigl|_{x=0}\text{\ \ for\ \
}j=k+1,\ldots,2k+1.
\end{eqnarray*}
is called generalized Taylor series in $L_{2}(\bB_3,S) \cap \ker \pa$.
\end{Def}
Let us note that the partial derivatives $\pa/\pa x_3,$ $\pa/\pa z$ and $\pa/\pa \nad z$ commute
with each other.

%%%%%%%%%%%%%%%%%%%%%%%%%%%%%%%%%%%%%%%%%%%%%%%%%%

It is interesting to compare both Taylor series from Definitions \ref{T_series_quat} and
\ref{T_series_spin}. In both cases, the basis is orthogonal and the corresponding coefficients can
be expressed using (linear combinations of) partial derivatives of the corresponding function. The
derivatives used in both cases look different  but there are trivially related (at least for
monogenic functions) to each other. In the formulation using spinor valued functions, the Appell
property is true even w.r.t.\ all three variables. Hence in this case application of any of three
basic derivatives map any basis element to a multiple of another basis element. For quaternion
valued functions, it is not the case.

Applying a simple normalization (see, i.e., \cite{Bock2009,BG}) to each element
(\ref{Equation::RecurrenceAppell_Formel_III}) of the Appell basis, explicitly given by the relation
\begin{equation}\label{Equation::Transformation_A_Phi}
\varphi_{n,\sIH}^{l} \;=\;
\frac{1}{2^{l+1}\,n!}\,\sqrt{\frac{(2n+3)\,(n-l)!\,(n+l+1)!}{\pi}}\;A_{n}^{l},\;\;
l=0,\ldots,n,\;n\in\INo,
\end{equation}
yields directly:
\begin{cor}[\cite{Bock2009,BG}]
The system of inner solid spherical monogenics
\begin{equation}\label{Equation::CONS_IH}
\bigl\{ \varphi_{n,\sIH}^{l}\,:\,l=0,\ldots,n\bigr\}_{n\in\INo}
\end{equation} is an orthonormal basis in
$L^{2}(\bB_3,\IH) \cap \ker D$.
\end{cor}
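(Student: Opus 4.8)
The plan is to deduce almost everything from Theorem \ref{BG} and reduce the statement to a single norm computation. By Theorem \ref{BG} the unnormalized family $\{A_n^l:l=0,\ldots,n\}_{n\in\INo}$ is already an orthogonal basis of the right $\bH$-linear Hilbert space $L^2(\bB_3,\IH)\cap\ker D$; multiplying each $A_n^l$ by the nonzero \emph{real} constant appearing in \eqref{Equation::Transformation_A_Phi} destroys neither the mutual orthogonality nor the completeness, so the system \eqref{Equation::CONS_IH} is automatically an orthogonal basis. Since for an $\bH$-valued polynomial $Q$ the quantity $(Q,Q)=\int_{\bB_3}\nad QQ\,d\la^3=\int_{\bB_3}|Q|^2\,d\la^3$ is a nonnegative real number, ``orthonormal'' here just means $(\varphi^l_{n,\sIH},\varphi^l_{n,\sIH})=1$. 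Hence the only thing to verify is that the real factor in \eqref{Equation::Transformation_A_Phi} equals $\|A_n^l\|^{-1}$, i.e.
$$\|A_n^l\|^2=\int_{\bB_3}|A_n^l|^2\,d\la^3=\frac{2^{2l+2}\,(n!)^2\,\pi}{(2n+3)\,(n-l)!\,(n+l+1)!}.$$

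For this norm I would use the explicit spherical-coordinate representation of $A^l_n=g^{n}_{n-l}$ recorded in the Remark after Corollary \ref{corappell}, where $g^{n}_{n-l}=\tfrac{n!}{(n-l)!}(-2)^{l}r^{n}\bigl(g_0+g_1i_1+g_2i_2+g_3i_3\bigr)$ and the four real components $g_s$ are, up to sign, $P^{-l}_n(\cos\theta)$ or $(n-l)P^{-l-1}_n(\cos\theta)$ times $\cos$ or $\sin$ of $l\fai$ or $(l+1)\fai$. The key simplification is that the azimuthal dependence cancels after summing squares, $g_0^2+g_1^2+g_2^2+g_3^2=(P^{-l}_n(\cos\theta))^2+(n-l)^2(P^{-l-1}_n(\cos\theta))^2$, thanks to $\cos^2+\sin^2=1$. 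Writing $d\la^3=r^2\,dr\,d\sigma$, the radial integral contributes $\int_0^1 r^{2n+2}\,dr=\tfrac1{2n+3}$, the $\fai$-integral contributes $2\pi$, and the $\theta$-integral is handled by the classical orthogonality relations for associated Legendre functions, $\int_{-1}^{1}(P^{m}_n(t))^2\,dt=\frac{2}{2n+1}\frac{(n+m)!}{(n-m)!}$. Taking $m=-l$ and $m=-l-1$, combining the two terms over the common denominator $(n+l+1)!$ and collapsing the bracket via $(n-l)(n+l+1)+(n-l)^2=(n-l)(2n+1)$ yields $\int_{S^2}(g_0^2+g_1^2+g_2^2+g_3^2)\,d\sigma=\tfrac{4\pi\,(n-l)!}{(n+l+1)!}$; multiplying by $\bigl(n!/(n-l)!\bigr)^2 2^{2l}(2n+3)^{-1}$ gives the displayed value of $\|A_n^l\|^2$, and substituting into \eqref{Equation::Transformation_A_Phi} gives $(\varphi^l_{n,\sIH},\varphi^l_{n,\sIH})=1$.

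I expect the only genuine obstacle to be this norm bookkeeping -- keeping track of the powers of $2$ and of the factorial ratios, and handling the extremal index $l=n$ with a word of care, where $P^{-n-1}_n\equiv0$ (and the coefficient $n-l$ vanishes) so that two components disappear and $A_n^n=(y_1-i_3y_2)^n$, the formula remaining valid. A self-contained alternative that avoids Legendre functions is to prove the norm identity by induction on $n$ with $l$ fixed, using the two-step recurrence \eqref{Equation::RecurrenceAppell_Formel_III}, the Appell relation $\overline{D}_0A_n^l=nA_{n-1}^l$ from Theorem \ref{BG}, and a Stokes/Green-type identity expressing the $L^2$-adjoint of $\overline{D}_0$ on homogeneous monogenics through multiplication by $y$ and $\nad y$; in either route, orthogonality and completeness come for free from Theorem \ref{BG}.
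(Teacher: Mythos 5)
Your proposal is correct and matches the paper's intended one-line argument (orthogonality and completeness from Theorem \ref{BG}, then the normalization \eqref{Equation::Transformation_A_Phi}); the paper simply cites \cite{Bock2009,BG} for the norm, whereas you actually compute $\|A_n^l\|^2$ by combining the identification $A_n^l = g^n_{n-l}$ with the Legendre-function formula from the Remark after Corollary \ref{corappell} -- a self-contained verification that I checked and that does give $\|A_n^l\|^2 = \tfrac{2^{2l+2}(n!)^2\pi}{(2n+3)(n-l)!(n+l+1)!}$, including the boundary case $l=n$ where $P^{-n-1}_n\equiv 0$. The only point worth flagging is that you tacitly switch from the sphere inner product $(Q,R)_{\bH}=\int_{S^2}\nad{Q}R\,d\sigma$ used in Section 5 to the ball $L^2$ norm $\int_{\bB_3}|Q|^2\,d\la^3$; that is the right choice (it is what the Fourier coefficients and the phrase ``orthonormal in $L^2(\bB_3,\IH)$'' refer to), but it deserved a word since the two normalizations differ by a $k$-dependent constant on $\cM_k$.
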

Due to the orthogonality and the completeness of the orthonormal system (\ref{Equation::CONS_IH})
we state the Fourier series expansion in $L_{2}(\bB_3,\IH) \cap \ker D$.
\begin{cor}[Fourier series in $L_{2}(\bB_3,\IH) \cap \ker D$]
Let $f \in L_{2}(\bB_3,\IH) \cap \ker D$. Then $f$ can be uniquely represented in terms of the
orthonormal system (\ref{Equation::CONS_IH}), that is:
\begin{equation}\label{Equation::FourierSeries}
f\,:=\, \sum_{n=0}^{\infty}\sum_{l=0}^{n}\,\varphi_{n,\sIH}^{l}\,
\boldsymbol{\alpha}_{n,l},\quad\text{with}\quad\boldsymbol{\alpha}_{n,l}\,=\,\int_{\mathbb{B}_{3}}
\overline{\varphi_{n,\sIH}^{l}}\,f\,d\la^3.
\end{equation}
\end{cor}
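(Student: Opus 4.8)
The plan is to obtain $(\ref{Equation::FourierSeries})$ as an immediate application of the abstract expansion theorem for orthonormal bases of a Hilbert space, since the preceding Corollary already asserts that $(\ref{Equation::CONS_IH})$ is an orthonormal basis of $L_{2}(\bB_3,\IH)\cap\ker D$; the only care needed is that the inner product is $\IH$-valued, so everything must be phrased for right $\IH$-linear Hilbert spaces. Throughout, the inner product is $(Q,R)_{\IH}=\int_{\bB_3}\nad QR\,d\la^3$, which is right $\IH$-linear in $R$, satisfies $(R,Q)_{\IH}=\nad{(Q,R)_{\IH}}$, and has $(Q,Q)_{\IH}=\int_{\bB_3}|Q|^{2}\,d\la^3\ge 0$; hence $\|Q\|:=(Q,Q)_{\IH}^{1/2}$ is a genuine norm and $\boldsymbol{\alpha}_{n,l}=(\varphi_{n,\sIH}^{l},f)_{\IH}$.

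First I would spell out what the phrase ``orthonormal basis'' delivers here. Put $E=\{\varphi_{n,\sIH}^{l}:l=0,\ldots,n,\ n\in\INo\}$. Orthonormality of $E$ rests on two facts already in the paper: within a fixed degree $n$, the finitely many $\varphi_{n,\sIH}^{l}$ form an orthonormal basis of the finite-dimensional right $\IH$-module $\cM_{n}(\bR^3,\IH)$ (Theorem \ref{appell} together with the normalization $(\ref{Equation::Transformation_A_Phi})$), and monogenic homogeneous polynomials of different degrees are mutually orthogonal in $L_{2}(\bB_3,\IH)$ (their restrictions to $S^{2}$ are eigenfunctions of the spherical Dirac operator for distinct eigenvalues, and radial integration then separates the degrees). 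Completeness of $E$ is the density of $\bigoplus_{n\ge 0}\cM_{n}(\bR^3,\IH)$ in $L_{2}(\bB_3,\IH)\cap\ker D$, i.e.\ the classical fact that an $L_{2}$ monogenic function on the ball is the $L_{2}$-limit of its homogeneous monogenic Taylor parts (the $\IH$-valued counterpart, via the identification of Section 5, of the statement recalled there for spinor-valued functions). These are exactly the content of the preceding Corollary.

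Next comes the standard argument. Fix $f\in L_{2}(\bB_3,\IH)\cap\ker D$ and put $S_{N}f=\sum_{n=0}^{N}\sum_{l=0}^{n}\varphi_{n,\sIH}^{l}\,\boldsymbol{\alpha}_{n,l}$. By orthonormality, $S_{N}f$ is the orthogonal projection of $f$ onto $\bigoplus_{n\le N}\cM_{n}(\bR^3,\IH)$, so that $\|f\|^{2}=\|S_{N}f\|^{2}+\|f-S_{N}f\|^{2}$ with $\|S_{N}f\|^{2}=\sum_{n\le N}\sum_{l\le n}|\boldsymbol{\alpha}_{n,l}|^{2}$; in particular Bessel's inequality holds and $(S_{N}f)_{N}$ is Cauchy in $L_{2}(\bB_3,\IH)$. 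By completeness, $\|f-S_{N}f\|\to 0$, hence $S_{N}f\to f$, which is precisely $(\ref{Equation::FourierSeries})$, and one also obtains Parseval's identity $\|f\|^{2}=\sum_{n,l}|\boldsymbol{\alpha}_{n,l}|^{2}$. For uniqueness, if $f=\sum_{n,l}\varphi_{n,\sIH}^{l}\,\boldsymbol{\beta}_{n,l}$ converges in $L_{2}(\bB_3,\IH)$, then pairing on the left with $\varphi_{n',\sIH}^{l'}$, using continuity of $(\cdot,\cdot)_{\IH}$ and its right $\IH$-linearity in the second slot together with orthonormality, yields $\boldsymbol{\beta}_{n',l'}=(\varphi_{n',\sIH}^{l'},f)_{\IH}=\boldsymbol{\alpha}_{n',l'}$.

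The argument has no real obstacle: once the preceding Corollary is in hand the statement is pure Hilbert-space theory, and the only mild subtlety is carrying out that theory over the skew field $\IH$ — which goes through verbatim because the $\IH$-valued inner product is positive on the diagonal and right $\IH$-linear, so orthogonal projections onto finite-dimensional submodules and the Bessel/Parseval identities hold exactly as in the complex case. If one preferred a fully self-contained proof, the single non-formal input would be the density statement of the second paragraph, which is classical.
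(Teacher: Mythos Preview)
Your argument is correct and matches the paper's approach: the paper does not actually prove this corollary, it simply states it as an immediate consequence of the orthonormality and completeness of the system $(\ref{Equation::CONS_IH})$, which is exactly what you spell out in the standard Hilbert-space framework for right $\IH$-linear spaces.
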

Here, it should be emphasized that in contrast to the complex case the order of
$\varphi_{n,\sIH}^{l}$ and $f$ in the inner products has to be respected. As a direct consequence
of relation (\ref{Equation::Transformation_A_Phi}) and the orthogonality of both series expansions,
each Fourier coefficient (\ref{Equation::FourierSeries}) of a function $f\in
L_{2}(\bB_3,\IH)\cap\ker D$ can be explicitly expressed in terms of the corresponding Taylor
coefficient (\ref{Equation::TaylorSeries}) and vice versa by
\begin{equation*}
\boldsymbol{\alpha}_{n,l} \;=\; 2^{l+1}\,\sqrt{\frac{\pi}{(2n+3)\,(n-l)!\,(n+l+1)!}}\;
D_{\sIC}^{\,l}\,\overline{D}_{0}^{n-l}\,f(\mathbf{x})\,\Bigl|_{\mathbf{x}=\mathbf{0}},
\end{equation*}
where $l=0,\ldots,n$ and $n\in\INo$. This important analytic property of the series expansions
analogously corresponds to the complex one-dimensional case.

\section*{Acknowledgments}

R. L\'avi\v cka and V. Sou\v cek acknowledge the financial support from the grant GA 201/08/0397.
This work is also a part of the research plan MSM 0021620839, which is financed by the Ministry of Education of the Czech Republic.

%% The Appendices part is started with the command \appendix;
%% appendix sections are then done as normal sections
%% \appendix

%% \section{}
%% \label{}

%% References
%%
%% Following citation commands can be used in the body text:
%% Usage of \cite is as follows:
%%   \cite{key}         ==>>  [#]
%%   \cite[chap. 2]{key} ==>> [#, chap. 2]
%%

%% References with bibTeX database:

%\bibliographystyle{elsarticle-num}
%\bibliography{<your-bib-database>}

%% Authors are advised to submit their bibtex database files. They are
%% requested to list a bibtex style file in the manuscript if they do
%% not want to use elsarticle-num.bst.

%% References without bibTeX database:

% \begin{thebibliography}{00}

%% \bibitem must have the following form:
%%   \bibitem{key}...
%%

% \bibitem{}

% \end{thebibliography}

%%%%%%%%%%%%%%%%%%%%%%%%%%%%%%%%%%
%%%%%%%%%%%%%%%%%%%%%%%%%%%%%%%%%
\bigskip\bigskip\bigskip

\noindent
Sebastian Bock and Klaus G\"urlebeck,\\ Institute of Mathematics and Physics, Bauhaus University,
Weimar, Germany\\
email: \texttt{sebastian.bock@uni-weimar.de},\ \ \texttt{klaus.guerlebeck@uni-weimar.de}
\bigskip

\noindent
Roman L\'avi\v cka and Vladim\'ir Sou\v cek,\\ Mathematical Institute, Charles University,\\ Sokolovsk\'a 83, 186 75 Praha 8, Czech Republic\\
email: \texttt{lavicka@karlin.mff.cuni.cz},\ \ \texttt{soucek@karlin.mff.cuni.cz}

\end{document}